\documentclass[review,onefignum,onetabnum]{siamart250211}

\usepackage{lipsum}
\usepackage{amsfonts}
\usepackage{graphicx}
\usepackage{epstopdf}
\usepackage{algorithmic}
\usepackage{amssymb}
\usepackage[english]{babel}
\usepackage{array}
\usepackage{adjustbox}
\usepackage{relsize}
\usepackage{spverbatim}
\usepackage{listings}
\usepackage{mathrsfs}
\usepackage{derivative}
\usepackage{mathtools}
\usepackage{float}
\usepackage[section]{placeins}

\usepackage{hyperref}
\usepackage{cleveref}

\usepackage{etoolbox}

\makeatletter
\newcommand{\crefcomma}[1]{%
  \begingroup
    \def\crefcomma@sep{}%
    \forcsvlist{\crefcomma@do}{#1}%
  \endgroup
}
\newcommand{\crefcomma@do}[1]{%
  \ifx\crefcomma@sep\@empty\else,~\fi
  \cref{#1}%
  \def\crefcomma@sep{,}%
}
\newcommand{\Crefcomma}[1]{%
  \begingroup
    \def\crefcomma@sep{}%
    \forcsvlist{\Crefcomma@do}{#1}%
  \endgroup
}
\newcommand{\Crefcomma@do}[1]{%
  \ifx\crefcomma@sep\@empty\else,~\fi
  \Cref{#1}%
  \def\crefcomma@sep{,}%
}
\makeatother

\DeclareMathAlphabet{\mathpzc}{OT1}{pzc}{m}{it}

\newcolumntype{L}{>{$}l<{$}}

\ifpdf
  \DeclareGraphicsExtensions{.eps,.pdf,.png,.jpg}
\else
  \DeclareGraphicsExtensions{.eps}
\fi

\newsiamremark{hypothesis}{Hypothesis}
\crefname{hypothesis}{Hypothesis}{Hypotheses}
\newsiamthm{claim}{Claim}
\newtheorem{remark}{Remark}
\nolinenumbers

\headers{The general Brannan coefficient conjecture I}{T. M. Dunster}

\title{The general Brannan coefficient conjecture I: Watson-lemma approximations}

\author{T. M. Dunster\thanks{Department of Mathematics and Statistics, San Diego State University, 5500 Campanile Drive, San Diego, CA 92182-7720, USA. 
  (\email{mdunster@sdsu.edu}, \url{https://tmdunster.sdsu.edu}).}
  }

\usepackage{amsopn}

\makeatletter
\newcommand*{\addFileDependency}[1]{
  \typeout{(#1)}
  \@addtofilelist{#1}
  \IfFileExists{#1}{}{\typeout{No file #1.}}
}
\makeatother

\nolinenumbers

\ifpdf
\hypersetup{
  pdftitle={The general Brannan coefficient conjecture I: Watson-lemma approximations},
  pdfauthor={T. M. Dunster}
}
\fi

\begin{document}

\maketitle

\begin{abstract}
The coefficients $A_n(\alpha,\beta,\omega)$ in the Maclaurin expansion $(1+\omega z)^{\alpha}(1-z)^{-\beta}= \sum_{n=0}^{\infty} A_n(\alpha,\beta,\omega)z^n$ are studied, where $\omega,z \in \mathbb{C}$ with $|z| < |\omega|=1$, and $\alpha,\beta \in (0,1]$. In 1973 D. A. Brannan conjectured that $|A_n(\alpha,\beta,\omega)|\le A_n(\alpha,\beta,1)$ for each positive odd integer $n$, and showed it is true for $n=3$. This has recently been proven for all odd integers $n\ge5$ by a number of authors in aggregate for the special case $\beta=1$. In this paper hypergeometric integral representations and Watson-type approximations are utilised, from which the general problem is reduced to numerically evaluating the minima of certain simple, explicit, slowly-varying functions over compact domains. From the positivity of these constants it is shown that the conjecture holds for $\alpha, \beta \in (0,1]$, $0 \le |\arg(\omega)| \le \pi-\phi_0$ and $n=5,7,9,\ldots$, where $\phi_0=0.061$.
\end{abstract}

\begin{keywords}
{Brannan’s conjecture, Watson's lemma, hypergeometric functions, univalent functions, coefficient inequalities}
\end{keywords}

\begin{AMS}
33C05, 30C45, 30C50, 41A60
\end{AMS}

\section{Introduction}

Consider the Maclaurin series of the following univalent function
\begin{equation}
\label{eq01}
\frac{(1+\omega z)^{\alpha}}{(1-z)^{\beta}}
= \sum_{n=0}^{\infty} A_n(\alpha,\beta,\omega)z^n,
\end{equation}
where $z,\omega\in\mathbb{C}$ with $|z|<1=|\omega|$, and $\alpha,\beta \in (0,1]$. Each $A_n(\alpha,\beta,\omega)$ is a polynomial in $\omega$ of degree $n$, and it is well known that
\begin{equation}
\label{eq02}
A_n(\alpha,\beta,\omega)
= \frac{(\beta)_n}{n!}\,
{}_2F_1(-n,-\alpha;\,1-\beta-n;\,-\omega),
\end{equation}
where ${}_2F_1$ is the Gauss hypergeometric function given by
\begin{equation}
\label{eq03}
{}_2F_1(a,b;\,c;\,z)
= \sum_{k=0}^{\infty} \frac{(a)_k(b)_k}{(c)_k\,k!}\,z^k,
\end{equation}
for $|z|<1$, and elsewhere by analytic continuation, with the Pochhammer symbol defined in the usual way
\begin{equation}
\label{eq04}
(a)_n = \frac{\Gamma(a+n)}{\Gamma(a)}.
\end{equation}

The following conjecture due to Brannan \cite{Brannan:1974:OCP} in 1973 arose in relation to coefficient problems involving functions of bounded boundary rotation.

\medskip\noindent
\textbf{Brannan's Conjecture.}
Suppose $\alpha,\beta\in(0,1]$ and $0 \le |\theta| \le \pi$. Then
\begin{equation}
\label{eq05}
\left|A_n(\alpha,\beta,e^{i\theta})\right|
\le A_n(\alpha,\beta,1)
\quad (n=3,5,7,\ldots).
\end{equation}

In this paper we prove this to be true for $0 \le |\theta| \le \pi-0.061$ and $n=5,7,9,\ldots$;  Brannan \cite{Brannan:1974:OCP} had already shown it to be true for $n=3$.

\begin{remark}
From \cite{Barnard:2015:BCA} it was shown for $\alpha,\beta \in(0,1)$ and positive integer $n$
\begin{multline}
\label{eq06}
A_n(\alpha,\beta,\omega)
=\frac{\Gamma(n+\beta)}{\Gamma(\beta)\,n!}
+\frac{\Gamma(n-\alpha+\beta)
\Gamma(1+\alpha)\sin(\pi\alpha)}
{\pi\Gamma(\beta)\,n!}
\\ \times
\int_{0}^{1} p_n(\omega,t)\,t^{-1-\alpha}(1-t)^{n-1+\beta}\,dt,
\end{multline}
where
\begin{equation}
\label{eq07}
p_n(\omega,t)
=1-\left(1-\frac{\omega t}{1-t}\right)^{n},
\end{equation}
and from this it is straightforward to show that $A_n(\alpha,\beta,1) >0$ if $n$ is odd; the same is not necessarily true if $n$ is even.
\end{remark}

As we mentioned, Brannan \cite{Brannan:1974:OCP} verified his conjecture for the first nontrivial odd case $n=3$, and also showed that the inequality does not hold in general for even $n$. Aharonov and Friedland \cite{Aharonov:1972:OAI} proved the inequality \cref{eq05} in the case $\alpha,\beta \ge 1$. 

For the special case $\beta=1$ and $\alpha\in (0,1)$, early proofs were obtained for the individual odd indices $n=5$ (Milcetich \cite{Milcetich:1989:OAC}), and $n=7$ (Barnard, Pearce and Wheeler \cite{Barnard:1997:OAC}). Jayatilake \cite{Jayatilake:2013:BCF} later verified the conjecture for all odd $n\le 51$ using a computer-assisted squaring procedure.

More recently, Szász \cite{Szasz:2020:OTB} and (independently) Deniz, Çaglar and Szász \cite{Deniz:2020:TFS} proved complementary large-$n$ ranges via integral representations, giving a cumulative proof of Brannan’s conjecture for $\beta=1$. 
Barnard and Richards \cite{Barnard:2021:ADP} subsequently provided a fully direct analytic proof for $\beta=1$ valid for all positive odd $n$.

In the general two-parameter setting, Ruscheweyh and Salinas \cite{Ruscheweyh:2007:OBC} established the conjecture for all odd $n$ on the diagonal $\alpha=\beta\in(0,1)$. Recently, Cot\^{\i}rla and Sz\'asz \cite{Cotirla:2024:OTG} in 2024 proved the conjecture in the region $\frac34 \le \alpha\le \beta\le 1$ for all odd indices, using a new integral representation.

The plan of this paper is as follows.  In \cref{sec:int-lower-bnds} we first express $A_n(\alpha,\beta,\omega)$ in an alternative hypergeometric form, which is shown to be expressible as a loop-integral.  This is then decomposed into a sum of two Laplace-type integrals, and these are employed to obtain a quantitative lower bound for the difference between the left- and right-hand sides of \cref{eq05}. The resulting representation yields a tractable lower bound for the difference at $\omega=1$ and, after taking moduli, for $\omega$ on the unit circle.

We then in \cref{sec:Watson} apply the method of Watson’s lemma (see, for example, \cite[Chap.~3]{Olver:1997:ASF}, \cite[Chap.~2]{Temme:2015:AMF}, \cite[Chap.~5]{Wong:1989:AAI}) to derive a  two-term large-$n$ asymptotic approximation for the lower bound, together with explicit remainders. These remainder terms are given by integral representations, and from these we obtain simple lower bounds by computing the infima of certain smooth, explicit, slowly-varying functions over compact sets.

In \cref{sec:P5} we prove the conjecture for $n=5$, with $\alpha,\beta\in(0,1]$ and $0\le \arg(\omega)\le \pi-0.061$, by establishing positivity of the lower bound constructed from the main terms and the explicit lower error bounds of \cref{sec:Watson}.

In \cref{sec:n>5} we extend the result to all odd $n\ge 7$.  This is done by proving positivity of an appropriate derivative, again reducing the required estimates to minimizing a pair of simple, explicit, slowly-varying functions over a compact domain. Finally, in \cref{sec:Meijer} we discuss a method that will be used to tackle the remaining segment $\pi-0.061<|\arg(\omega)|\le \pi$.

We emphasise that all the functions being minimized, with the exception of the real-valued gamma function $\Gamma(x)$ with $x \in (0,2]$, and the real-valued incomplete gamma function $\Gamma(x,5)$ with $x \in [3,4]$ (both of which are readily computable to a high degree of accuracy), only involve slowly-varying elementary functions, and do not involve large/infinite sums or integrals. The only minor complications are mild removable singularities at the boundaries of the regions under consideration, and these are easily handled numerically by a limiting process.

\section{Integral lower bounds}
\label{sec:int-lower-bnds}
From  Euler's reflection formula
\begin{equation}
\label{eq08}
\frac{1}{\Gamma(z)\Gamma(1-z)}=\frac{\sin(\pi z)}{\pi},
\end{equation}
\cref{eq02} and \cite[Eqs. 5.2.5, 5.2.6, 15.8.6]{NIST:DLMF} we obtain for positive integer $n$
\begin{equation}
\label{eq09}
A_n(\alpha,\beta,\omega)
=\frac{(-1)^{n+1}}{\pi}
\omega^n w_{n}(\alpha,\beta,-1/\omega),
\end{equation}
where
\begin{equation}
\label{eq12}
w_{n}(\alpha,\beta,x)
=\frac{1}{n!} \Gamma(n-\alpha)\Gamma(1+\alpha)
\sin(\pi\alpha)\,
{}_2F_1\!\left(\beta,-n;\alpha-n+1;x\right).
\end{equation}

Letting $x=-1/\omega=\exp(i\phi)$, so that
\begin{equation}
\label{eq10}
\phi=\pi-\theta
\quad (\mathrm{mod} \; 2\pi),
\end{equation}
and then appealing to the Schwarz reflection principle, we see that Brannan's conjecture is equivalent to
\begin{equation}
\label{eq11}
w_{n}(\alpha,\beta,-1) \ge  \left|w_{n}(\alpha,\beta,e^{i\phi})\right|,
\end{equation}
for $\alpha,\beta\in(0,1]$, $\phi \in [0,\pi]$ and $n=3,5,7,\ldots$.

Note from \cref{eq01,eq04,eq09,eq12} that for $n=1,2,3,\ldots$
\begin{equation}
\label{eq13}
\lim_{\alpha\to 0} w_{n}(\alpha,\beta,x)
=-\frac{\pi\,\Gamma(\beta+n)}{\Gamma(\beta)\,n!}\,x^{n}.
\end{equation}
and for $n=2,3,4\ldots$
\begin{equation}
\label{eq14}
\lim_{\alpha\to 1} w_{n}(\alpha,\beta,x)
= \frac{\pi\,\Gamma(\beta+n-1)}{\Gamma(\beta)\,n!}
\left\{ n-(\beta+n-1)x \right\}\,x^{n-1}.
\end{equation}

We shall prove the following.
\begin{theorem}
\label{thm:main}
The inequality \cref{eq11} holds for $\alpha, \beta \in (0,1]$, $\phi \in [\phi_0 ,\pi]$ and $n=5,7,9,\ldots$, where $\phi_0=0.061$.
\end{theorem}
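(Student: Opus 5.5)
We follow the outline announced in the introduction. First we turn $w_n$ into an integral. Since $-n$ is a negative integer, the ${}_2F_1(\beta,-n;\alpha-n+1;x)$ in \cref{eq12} is a polynomial. We write it as a Pochhammer loop integral in the variable attached to the parameter $\beta$,
\[
{}_2F_1(\beta,-n;\alpha-n+1;x)\propto\oint t^{\beta-1}(1-t)^{\alpha-n-\beta}(1-xt)^{n}\,dt .
\]
The gamma prefactors in \cref{eq12}, together with the reflection formula \cref{eq08}, absorb the $\sin(\pi\alpha)$ factor. We then collapse the loop onto the cut $t\in(1,\infty)$. After the substitution $t=1+s/n$, or $t=e^{\tau}$, this should produce a sum of two Laplace-type integrals
\[
\int_0^\infty e^{-n\tau}\,f(\tau;\alpha,\beta,x)\,d\tau .
\]
Here $f$ is built from $\tau^{-\alpha}$-type algebraic factors and powers of $(1-xe^{\tau})/e^{\tau}$. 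At $x=-1$ the integrand should be positive, because $n$ is odd. This gives a positive main term for $w_n(\alpha,\beta,-1)$. For $x=e^{i\phi}$ we bound $|w_n(\alpha,\beta,e^{i\phi})|$ by the integral of the modulus of the integrand. The difference $w_n(\alpha,\beta,-1)-|w_n(\alpha,\beta,e^{i\phi})|$ is then bounded below by a real integral $D_n(\alpha,\beta,\phi)$. The limits \cref{eq13} and \cref{eq14} serve as consistency checks at $\alpha\to0,1$.

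Next we apply Watson's lemma to $D_n$. We expand the amplitude near $\tau=0$ to two terms, so that the main part is a combination of $\Gamma(1-\alpha)n^{\alpha-1}$ and $\Gamma(2-\alpha)n^{\alpha-2}$ times explicit elementary functions of $(\alpha,\beta,\phi)$. The remainder is written exactly as an integral of $e^{-n\tau}\tau^{2-\alpha}$ times a quotient $R(\tau;\alpha,\beta,\phi)$. We bound this remainder below by $\inf R\cdot\Gamma(3-\alpha)n^{\alpha-3}$, splitting $[0,\infty)$ at a fixed point such as $\tau=5$ so the tail is controlled by $\Gamma(x,5)$. This reduces everything to computing infima of smooth, slowly varying functions on compact sets $(\alpha,\beta,\phi)\in[0,1]^2\times[\phi_0,\pi]$. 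Removable singularities at the edges are treated as limits.

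For $n=5$ we check directly that the main terms plus the explicit remainder lower bound are positive on the whole domain. This is a finite numerical minimization. For $n\ge7$, rather than redoing each $n$, we show that the bound, suitably normalized, for instance multiplied by $n^{1-\alpha}$, is increasing in $n$ when $n$ is treated as continuous $\ge5$. The derivative with respect to $n$ is again a Watson-type expression, whose positivity reduces to minimizing two more explicit functions over a compact domain. Combining this monotonicity with the $n=5$ case proves the theorem for $n=5,7,9,\dots$.

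The main obstacle is the behaviour as $\phi\to0$, that is, as $x\to1$ and $\omega\to-1$. There the leading term of $D_n$ degenerates, because the phase factor $|1-e^{i\phi}e^{\tau}|$ approaches its value at the endpoint. The margin shrinks like a power of $\phi$ while the remainder does not, which is exactly why the result is restricted to $\phi\ge\phi_0=0.061$. I would first try to keep the $\phi$-dependence of the error bound sharp, by factoring out $\sin(\phi/2)$ explicitly before minimizing, so as to push $\phi_0$ as small as the numerics allow.
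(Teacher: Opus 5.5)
\textbf{Assessment.} Your overall architecture is the paper's: a Laplace-type representation, the triangle inequality, a two-term Watson expansion with an explicit remainder, numerical infima, a direct check at $n=5$, and then monotonicity of $n^{1-\alpha}P_n$. But the step everything rests on fails as you set it up.

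\textbf{The integral representation.} In the Euler integral attached to the parameter $\beta$, the integrand $t^{\beta-1}(1-t)^{\alpha-n-\beta}(1-xt)^{n}$ has two defects. It is of order $|t|^{\alpha-1}$ at infinity, which is not integrable for $\alpha>0$. It also has the non-integrable factor $(1-t)^{\alpha-n-\beta}$ at $t=1$. So the loop can be collapsed neither onto $(1,\infty)$ nor onto $[0,1]$. Moreover, $n$ enters through $\{(1-xt)/(1-t)\}^{n}$, so neither $t=e^{\tau}$ nor $t=1+s/n$ yields $e^{-n\tau}$ times an $n$-independent amplitude.

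The paper instead uses the Euler parametrisation attached to $-n$, which is just Cauchy's formula for the generating function: $\oint t^{-n-1}(1-t)^{\alpha}(t-\xi)^{-\beta}\,dt$ with $\xi=1/x$. There $n$ appears only in $t^{-n-1}$, and the integrand is $\mathcal{O}(t^{-n-1+\alpha-\beta})$ at infinity. The contour therefore collapses onto the two cuts issuing from $t=1$ and $t=\xi$, and $v=e^{s}$ gives exactly the two Laplace integrals of \cref{eq15}. One carries the factor $\sin(\pi\beta)$ with endpoint behaviour $s^{-\beta}$; the other carries $\sin(\pi\alpha)$ with endpoint behaviour $s^{\alpha}$. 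Writing ``this should produce a sum of two Laplace-type integrals'' does not supply that lemma.

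\textbf{The Watson step and the remainder.} Because of this two-integral structure, the main part is not a single expansion in $\Gamma(1-\alpha)n^{\alpha-1}$ and $\Gamma(2-\alpha)n^{\alpha-2}$. It is a positive term $\sin(\pi\beta)C_1\Gamma(1-\beta)n^{\beta-1}$ competing with a negative term $-\sin(\pi\alpha)C_2\Gamma(1+\alpha)n^{-1-\alpha}$, with $C_2$ as in \cref{eq33}.

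This is also the real $\phi\to0$ mechanism, and it is not that the margin shrinks like a power of $\phi$. The positive coefficient $C_1$ does not degenerate. Instead, $C_2\propto\{2\sin(\tfrac12\phi)\}^{-\beta}$ and the remainder constants blow up, because the singularity $e^{s}=e^{-i\phi}$ approaches the endpoint $s=0$.

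Your remainder treatment also misses two necessary normalisations.
\begin{itemize}
\item The difference $w_n(\alpha,\beta,-1)-|w_n(\alpha,\beta,e^{i\phi})|$ tends to $0$ on the edge $\alpha=0$ (by \cref{eq13}) and on the edge $\beta=0$ (where $w_n$ is independent of $x$). You must therefore first divide by $(\alpha+\beta)\sin(\pi\alpha)\sin(\pi\beta)$, as in \cref{eq58}, before an infimum over a closed box says anything.
\item After that division, a single normalising power $s^{2-\beta}$ fails at both ends. Near $\alpha=\beta=0$ the kernel behaves like $s^{2}\ln s$ (\cref{eq70}), so $L/s^{2-\beta}$ is unbounded below as $s\to0$. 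For $\alpha<\beta$, $L$ is negative and of order $s^{1+\alpha}$ as $s\to\infty$ (\cref{eq72}). Hence a bound of your form $\inf R\cdot\Gamma(3-\cdot)\,n^{\cdot-3}$ has $\inf R=-\infty$.
\end{itemize}
The paper normalises by $s^{1-\beta}$ on $[0,1]$, accepting an error of order $n^{-1+\alpha}$, and by $s^{2+\alpha}$ on $[1,\infty)$. The latter produces the tail factor $\Gamma(3+\alpha,n)$; that is where $\Gamma(x,5)$ comes from, not a split at $\tau=5$.
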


We shall attack this  problem using Watson's lemma for large $n$. The following representation will enable us to do so. In this, and throughout, all complex powers use the principal branches.

\begin{lemma}
\label{lem:wnLaplace}
Assume $\alpha,\beta \in [0,1]$, $1 \leq n \in \mathbb{N}$, and $x\in\mathbb{C}\setminus[0,\infty)$. Then
\begin{multline}
\label{eq15}
w_{n}(\alpha,\beta,x)=(-1)^{n+1}\sin(\pi\beta)
(-x)^{\,n-\alpha}\int_{0}^{\infty} 
(e^{s}-1)^{-\beta}\,(e^{s}-x)^{\alpha}e^{-ns}\,ds
\\
+ \sin(\pi\alpha)(-x)^{-\beta}
\int_{0}^{\infty} (e^{s}-1)^{\alpha}
\left(e^{s}-1/x\right)^{-\beta}e^{-ns} ds.
\end{multline}
When $\beta=1$ it is understood that the limit of the first term on the RHS applies.
\end{lemma}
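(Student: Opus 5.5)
The plan is to bypass the hypergeometric form \cref{eq12} and work from the generating function \cref{eq01}, using \cref{eq09} as the bridge between $A_n$ and $w_n$. First I would treat $\alpha,\beta\in(0,1)$, $n\ge1$ and $x\in\mathbb{C}\setminus[0,\infty)$, and put $\omega=-1/x$, so that $|\omega|$ is unrestricted. By Cauchy's formula,
\begin{equation*}
A_n(\alpha,\beta,\omega)=\frac{1}{2\pi i}\oint_{|z|=r}\frac{(1+\omega z)^{\alpha}}{(1-z)^{\beta}}\,\frac{dz}{z^{n+1}},
\end{equation*}
with $r<\min(1,1/|\omega|)$. Both sides are analytic in $\omega$ (the left side is a polynomial), so it suffices to establish the final identity for $x$ in an open set and then continue analytically to $\mathbb{C}\setminus[0,\infty)$.

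The integrand has branch cuts along $[1,\infty)$, from $(1-z)^{-\beta}$, and along the ray $\{x e^{s}:s\ge0\}$, from $(1+\omega z)^{\alpha}=(1-z/x)^{\alpha}$. The hypothesis $x\notin[0,\infty)$ guarantees that these two cuts are distinct rays. At infinity the integrand is $O(|z|^{\alpha-\beta-n-1})=O(|z|^{-1-n+1})$, so for $n\ge1$ with $\alpha<1$ or $\beta>0$ the large arc contributes nothing. The borderline case $n=1$, $\alpha=1$, $\beta=0$ is recovered by continuity in the parameters.

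Expanding the circle to infinity collapses the contour onto the two cuts.

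\begin{itemize}
\item On $[1,\infty)$, write $z=e^{s}$. The jump of $(1-z)^{-\beta}$ across the cut gives a factor $2i\sin(\pi\beta)$ times $(e^{s}-1)^{-\beta}$, and $z^{-n-1}\,dz=e^{-ns}\,ds$. Also $(1-e^{s}/x)^{\alpha}=(-x)^{-\alpha}(e^{s}-x)^{\alpha}$.
\item On the ray through $x$, write $z=xe^{s}$. The jump of $(1-e^{s})^{\alpha}$ gives $2i\sin(\pi\alpha)(e^{s}-1)^{\alpha}$. Further, $(1-xe^{s})^{-\beta}=(-x)^{-\beta}(e^{s}-1/x)^{-\beta}$ and $z^{-n-1}\,dz=x^{-n}e^{-ns}\,ds$.
\end{itemize}

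Multiplying by the prefactor from \cref{eq09}, namely $w_n=(-1)^{n+1}\pi\,\omega^{-n}A_n=(-1)^{n+1}\pi(-x)^{n}A_n$, should produce exactly \cref{eq15}. The factor $(-x)^{n-\alpha}$ in the first term comes from combining $(-x)^{n}$ with $(-x)^{-\alpha}$. In the second term, $(-x)^{n}x^{-n}=(-1)^{n}$ cancels the $(-1)^{n+1}$ up to the orientation sign, leaving $\sin(\pi\alpha)(-x)^{-\beta}$.

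The step I expect to be the main obstacle is the phase bookkeeping with principal branches. I must check that the factorisations $(1-e^{s}/x)^{\alpha}=(-x)^{-\alpha}(e^{s}-x)^{\alpha}$ and $(1-xe^{s})^{-\beta}=(-x)^{-\beta}(e^{s}-1/x)^{-\beta}$ hold with no extra $e^{2\pi i k\alpha}$ factors, on the correct side of each cut, uniformly for $x\in\mathbb{C}\setminus[0,\infty)$. To do this I would first verify them for $x<0$. In that case $-x>0$, every base is positive, the identities are trivial, and the two cuts are $[1,\infty)$ and $(-\infty,x]$, so the signs and orientations of the cut contributions can be read off directly. I would then argue that both sides are analytic in $x$ on the slit plane, since $\arg(-x)\in(-\pi,\pi)$ and $e^{s}-x$, $e^{s}-1/x$ never cross the negative axis there. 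Analytic continuation then extends \cref{eq15} to the whole slit plane.

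Finally I would handle the endpoint cases.

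\begin{itemize}
\item For $\alpha$ or $\beta$ equal to $0$, the formula follows by continuity, since both sides are continuous in the parameters. Checking the case $\alpha=0$ against \cref{eq13} provides a consistency test.
\item For $\beta=1$, the first integral diverges at $s=0$ like $1/s$, but $\sin(\pi\beta)$ vanishes. The product has a finite limit, equal to the residue-type contribution from the pole at $z=1$, which justifies the stated convention.
\item For $\alpha=1$, the limit should match \cref{eq14}.
\end{itemize}
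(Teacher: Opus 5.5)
Your proposal is correct and follows essentially the same route as the paper: Cauchy's formula for $A_n$, collapsing the contour onto the two branch cuts issuing from $1$ and $x$, the exponential substitution, and then analytic continuation in $x$ together with continuity in $\alpha,\beta$. The only real difference is that you anchor the computation at $x<0$ and work directly in $z$ rather than rescaling $z=t/\xi$. This makes the principal-branch factorizations trivially valid, whereas the paper verifies the identity on the arc $x=e^{-i\varphi}$, $\varphi\in(0,\pi]$, where the branch bookkeeping needs more care.
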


\begin{proof}
From \cref{eq01} and Cauchy's integral formula, for any $c\in(0,1)$ we have for $|\omega| <1$
\begin{equation}
\label{eq16a}
A_n(\alpha,\beta,\omega)
=\frac{1}{2\pi i}\oint_{|z|=c}\frac{(1+\omega z)^{\alpha}}{(1-z)^{\beta}}\,
\frac{dz}{z^{n+1}},
\end{equation}
with the contour positively orientated. Setting $\omega=-\xi$ and using \cref{eq09} (with $x=1/\xi=-1/\omega$) yields
\begin{equation}
\label{eq16}
w_{n}(\alpha,\beta,1/\xi)
=-\frac{1}{2i\xi^{n}}\oint_{|z|=c}\frac{(1-\xi z)^{\alpha}}{(1-z)^{\beta}}\,
\frac{dz}{z^{n+1}}.
\end{equation}
Now we temporarily assume $\xi=e^{i\varphi}$ with $\varphi\in(0,\pi]$
and $\alpha,\beta\in(0,1)$, and make the change of variable
$z=t/\xi$. Then $dz=dt/\xi$ and $z^{-n-1}=\xi^{n+1}t^{-n-1}$, so
\cref{eq16} becomes
\begin{equation}
w_n(\alpha,\beta,1/\xi)
=-\frac{1}{2i}\oint_{|t|=c}
t^{-n-1}(1-t)^\alpha
\left(1-\frac{t}{\xi}\right)^{-\beta}\,dt .
\end{equation}
Since all powers are taken on the principal branches, we have
\begin{equation}
\left(1-\frac{t}{\xi}\right)^{-\beta}
=\left(-\frac1{\xi}\right)^{-\beta}(t-\xi)^{-\beta}
=(-\xi)^\beta(t-\xi)^{-\beta}.
\end{equation}
Thus
\begin{equation}
\label{eq17}
w_n(\alpha,\beta,1/\xi)
=-\frac{1}{2i}(-\xi)^\beta
\oint_{|t|=c}
t^{-n-1}(1-t)^\alpha(t-\xi)^{-\beta}\,dt
=\frac12 i\,(-\xi)^\beta I(\xi),
\end{equation}
where
\begin{equation}
\label{eq18}
I(\xi)=\oint_{|t|=c} 
\frac{t^{-n-1}(1-t)^{\alpha}}{(t-\xi)^{\beta}}\,dt,
\end{equation}
with the contour again positively orientated and $c \in (0,1)$.

Next, $\xi=e^{i\varphi}$ lies off the cut $[1,\infty)$, and the branch points $t=1$ and $t=\xi$ are distinct.  
The contour initially lies inside the unit circle $|t|=1$ and so avoids the branch point $t=\xi$. We take the branch cut for $(1-t)^{\alpha}$ to be $[1,\infty)$ (principal branch on $\mathbb{C}\setminus[1,\infty)$), and the branch cut for $(t-\xi)^{-\beta}$ to be the ray extending from $\xi$ to $\infty$ with $\arg(t)=\arg(\xi)=\varphi$.

Since we assume that $\alpha,\beta \in (0,1)$, the branch points at $t=1$ and $t=\xi$ are locally integrable, and as $t\to\infty$ the integrand is $\mathcal{O}(t^{-n-1+\alpha-\beta})$, so contributions from circular arcs vanish at infinity.
Hence, the loop in \cref{eq18} may be deformed to four rays, two on either side of both cuts. Consequently, we arrive at 
\begin{equation}
\label{eq19}
I(\xi)=I_{\xi}^{+}(\xi)-I_{\xi}^{-}(\xi)
+I_{1}^{+}(\xi)-I_{1}^{-}(\xi),
\end{equation}
where $I_{\xi}^{\pm}(\xi)$ denotes the integral of $t^{-n-1}(1-t)^{\alpha}(t-\xi)^{-\beta}$ from $t=\xi$ to $t=\infty \exp(i\varphi)$ with  $\arg(\xi-t)=\arg (\xi) \mp \pi$, and $I_{1}^{\pm}(\xi)$ denotes the integral of the same integrand from $t=1$ to $t=\infty$ with $\arg(1-t)=\mp \pi$.

For the first pair make the change of variable $t=\xi v$ with $1 \le v < \infty$, so that for $I_{\xi}^{\pm}(\xi)$
\begin{equation*}
(t-\xi)^{-\beta}
=(-\xi)^{-\beta}(1-v)^{-\beta}
=e^{\pm i\pi\beta}(-\xi)^{-\beta}(v-1)^{-\beta}
\quad (1 \le v < \infty).
\end{equation*}
Therefore, since $(1-t)^{\alpha}=(1-\xi v)^{\alpha}$ takes the same value for both integrals for $\varphi\in(0,\pi)$, we obtain
\begin{equation}
\label{eq21}
I_{\xi}^{+}(\xi)-I_{\xi}^{-}(\xi)
=2i (-1)^n\sin(\pi\beta)(-\xi)^{-n-\beta}
\int_{1}^{\infty} v^{-n-1}(v-1)^{-\beta}(1-\xi v)^{\alpha}\,dv.
\end{equation}
Now for $v$ above  and below the cut $[1,\infty)$
\begin{equation*}
(1-v)^{\alpha}=e^{\mp i\pi\alpha}(v-1)^{\alpha},
\end{equation*}
and therefore (with $t$ replaced by $v$ for consistency)
\begin{equation}
\label{eq23}
I_{1}^{+}(\xi)-I_{1}^{-}(\xi)
=-2i\sin(\pi\alpha)\int_{1}^{\infty}
v^{-n-1}(v-1)^{\alpha}
(v-\xi)^{-\beta}dv.
\end{equation}
Inserting \cref{eq21,eq23} into \cref{eq19} yields
\begin{multline}
\label{eq24}
I(\xi)=2i(-1)^n\sin(\pi\beta)(-\xi)^{-n-\beta}
\int_{1}^{\infty} v^{-n-1}(v-1)^{-\beta}
(1-\xi v)^{\alpha}\,dv
\\
-2i\sin(\pi\alpha)\int_{1}^{\infty}
v^{-n-1}(v-1)^{\alpha}
(v-\xi)^{-\beta}dv.
\end{multline}

Now plug \cref{eq24} into \cref{eq17} and in each of the
resulting integrals set $v=e^{s}$, so that $dv=e^{s}\,ds$ and
$v^{-n-1}dv=e^{-ns}\,ds$. Then, on using $(1-\xi e^{s})^{\alpha}=(-\xi)^{\alpha}(e^{s}-\xi^{-1})^{\alpha}$ and replacing $\xi$ by $1/x$, we confirm that \cref{eq15} holds for $\alpha,\beta \in (0,1)$ and $\arg(1/x)=\varphi \in (0,\pi]$.
Finally, the identity extends to $\alpha,\beta\in[0,1]$ by continuity (all boundary singularities are removable, with the case $\beta=1$ interpreted by the limiting value of the first term), and it extends from $x=e^{-i\varphi}$, $\varphi\in(0,\pi]$, to all $x\in\mathbb{C}\setminus[0,\infty)$ by analytic continuation in $x$.
\end{proof}

Our next step is to use this representation to give a suitable lower bound for the difference of the two terms in \cref{eq11}.

\begin{lemma}\label{lem:h-lower-bound}
Let $\alpha,\beta \in (0,1]$, $1 \le n\in\mathbb{N}$ be odd, $\phi \in [0,\pi)$, and define
\begin{equation}
\label{eq25}
\Delta = \pi - \phi > 0,
\end{equation}
\begin{equation}
\label{eq26}
R(s,\phi)
= \left|e^s-e^{\pm i\phi}\right|
= \sqrt{e^{2s} + 1 - 2e^s\cos (\phi)},
\end{equation}
\begin{equation}
\label{eq27}
I_1(\alpha,\beta,\phi;n)
=\frac{1}{\Delta^{2}}
\int_0^\infty \left(e^{s}-1\right)^{-\beta}
\left\{\left(e^{s}+1\right)^{\alpha}
-R(s,\phi)^{\alpha}\right\}e^{-ns}ds,
\end{equation}
\begin{equation}
\label{eq28}
I_2(\alpha,\beta,\phi;n)
= \frac{1}{\Delta^{2}}
\int_0^\infty \left(e^{s}-1\right)^{\alpha}
\left\{\left(e^{s}+1\right)^{-\beta}
-R(s,\phi)^{-\beta}\right\}e^{-ns}ds,
\end{equation}
and
\begin{equation}
\label{eq29}
h(\alpha,\beta,\phi;n)
= \sin(\pi\beta)\,I_1(\alpha,\beta,\phi;n)
+ \sin(\pi\alpha)\,I_2(\alpha,\beta,\phi;n).
\end{equation}
Then
\begin{equation}
\label{eq30}
\Delta^{-2}
\left\{w_{n}(\alpha,\beta,-1)
-\left| w_{n}(\alpha,\beta,e^{i\phi}) 
\right|\right\}
\ge  h(\alpha,\beta,\phi;n).
\end{equation}

\end{lemma}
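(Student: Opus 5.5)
Apply \cref{lem:wnLaplace} at $x=-1$ and at $x=e^{i\phi}$, both of which lie in $\mathbb{C}\setminus[0,\infty)$ for $\phi\in(0,\pi)$; the case $\phi=0$ follows by continuity or is handled by direct limits. For $x=-1$ we have $(-x)^{n-\alpha}=1$, $(-x)^{-\beta}=1$, $1/x=-1$, and $(-1)^{n+1}=1$ because $n$ is odd. Hence
\begin{equation*}
w_n(\alpha,\beta,-1)=\sin(\pi\beta)\int_0^\infty (e^s-1)^{-\beta}(e^s+1)^{\alpha}e^{-ns}\,ds+\sin(\pi\alpha)\int_0^\infty (e^s-1)^{\alpha}(e^s+1)^{-\beta}e^{-ns}\,ds .
\end{equation*}
This is a positive real number.

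For $x=e^{i\phi}$ we use $|(-x)^{n-\alpha}|=1$ and $|(-x)^{-\beta}|=1$. These hold because $-x=e^{i(\phi-\pi)}$ is unimodular with real exponents and principal branches, and the same holds for $-1/x$. By the triangle inequality and $|\int f|\le\int|f|$,
\begin{equation*}
|w_n(\alpha,\beta,e^{i\phi})|\le \sin(\pi\beta)\int_0^\infty (e^s-1)^{-\beta}|e^s-e^{i\phi}|^{\alpha}e^{-ns}\,ds+\sin(\pi\alpha)\int_0^\infty (e^s-1)^{\alpha}|e^s-e^{-i\phi}|^{-\beta}e^{-ns}\,ds .
\end{equation*}
This uses $\sin(\pi\alpha),\sin(\pi\beta)\ge0$ for $\alpha,\beta\in(0,1]$, and $|(e^s-x)^\alpha|=|e^s-x|^\alpha$ for real $\alpha$. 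Both moduli equal $R(s,\phi)$ as defined in \cref{eq26}.

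Subtracting the second display from the first and dividing by $\Delta^2>0$ gives exactly $\sin(\pi\beta)I_1+\sin(\pi\alpha)I_2=h$, which is \cref{eq30}.

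The only delicate point is $\beta=1$. There $\sin(\pi\beta)=0$ but $(e^s-1)^{-1}$ is not integrable at $s=0$, and \cref{lem:wnLaplace} interprets the first term as a limit. I would handle it by proving the inequality for $\beta<1$ and letting $\beta\to1^-$. Both sides are continuous in $\beta$: the left side because $w_n$ is continuous in $\beta$, and the right side because $\sin(\pi\beta)I_1$ has a finite limit. The integrand difference $(e^s+1)^\alpha-R^\alpha$ does not vanish at $s=0$ in general, but $\sin(\pi\beta)\int_0^\infty(e^s-1)^{-\beta}g(s)\,ds\to \pi g(0)$ as $\beta\to1$. This limit is the same on both sides, so the inequality persists by passing to limits termwise before taking moduli.

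I expect no real obstacle beyond checking the branch bookkeeping, in particular that the prefactors have unit modulus and the sign $(-1)^{n+1}=1$ at $x=-1$.
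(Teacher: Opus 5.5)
Your proposal is correct and is essentially the paper's proof: evaluate \cref{eq15} at $x=-1$ (unit prefactors, using that $n$ is odd) and at $x=e^{i\phi}$ (unimodular prefactors), apply the integral triangle inequality to the latter, subtract, and divide by $\Delta^2$. You also handle two endpoint cases the paper leaves implicit, and both treatments are correct. At $\phi=0$, $x=1$ lies outside the domain of \cref{lem:wnLaplace}, so continuity in $\phi$ is needed (e.g.\ by dominated convergence using $e^s-1\le R(s,\phi)\le e^s+1$). At $\beta=1$, letting $\beta\to1^-$ gives $\sin(\pi\beta)I_1\to\pi C_1(\alpha,\phi)$.
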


\begin{remark}
We introduced the factor $\Delta^{-2}$ for convenience, in order that both sides of the inequality \cref{eq30} do not vanish as $\phi \to \pi$, while remaining bounded in this limit, as we show shortly.
\end{remark}

\begin{proof}
In \cref{eq15} set $x=-1$ and $x=\exp(i \phi)$, taking absolute values and then the integral triangle inequality on the latter. On subtraction of the two we deduce that
\begin{multline}
\label{eq31}
w_{n}(\alpha,\beta,-1)
-\left| w_{n}(\alpha,\beta,e^{i\phi}) \right|
\\
\ge
\sin(\pi\beta)\,
\int_{0}^{\infty} 
(e^{s}-1)^{-\beta}\,\left\{ (e^{s}+1)^{\alpha}
-\left|e^{s}-e^{i\phi}\right|^{\alpha} 
\right\}e^{-ns}\,ds \\
\quad
+ \sin(\pi\alpha)\,
\int_{0}^{\infty} 
(e^{s}-1)^{\alpha}\,
\left\{ (e^{s}+1)^{-\beta}-\left| e^{s}
-e^{-i\phi} \right|^{-\beta} \right\}e^{-ns}\,ds,
\end{multline}
and \cref{eq30} then follows on division of both sides by $\Delta^2$.
\end{proof}

\section{Two-term Watson-type expansions for \texorpdfstring{$I_1(\alpha,\beta,\phi;n)$}{} and \texorpdfstring{$I_2(\alpha,\beta,\phi;n)$}{}}
\label{sec:Watson}

Our Watson-type approximation with explicit remainder terms reads as follows.
\begin{lemma}
\label{lem:I1I2-expansion}
Let
\begin{equation}
\label{eq32}
C_1(\alpha,\phi) = \Delta^{-2}
\left[2^\alpha - \left\{2\sin(\tfrac12 \phi)
\right\}^\alpha\right],
\end{equation}
\begin{equation}
\label{eq33}
C_2(\beta,\phi) 
= \Delta^{-2}
\left[\left\{2\sin(\tfrac12 \phi)
\right\}^{-\beta}
- 2^{-\beta}\right],
\end{equation}
\begin{equation}
\label{eq34}
K_1(\alpha,\beta,\phi;s)
= \frac{1}{s^{2}}\left[\left(\frac{s}{e^s-1}\right)^{\beta}
\frac{(e^s+1)^\alpha - R(s,\phi)^\alpha}
{2^\alpha - \left\{2\sin(\tfrac12 \phi)\right\}^\alpha}- 1
- \frac{\alpha-\beta}{2}\,s
\right],
\end{equation}
and
\begin{equation}
\label{eq35}
K_2(\alpha,\beta,\phi;s)
= \frac{1}{s^{2}}\left[\left(\frac{e^s-1}{s}\right)^{\alpha}
\frac{R(s,\phi)^{-\beta}-(e^s+1)^{-\beta}}
{\left\{2\sin(\tfrac12 \phi)\right\}^{-\beta}
-2^{-\beta} }- 1
- \frac{\alpha-\beta}{2}\,s
\right],
\end{equation}
with $R(s,\phi)$ defined by \cref{eq26}. Then, for $\alpha,\beta \in (0,1)$ and $\phi \in [\phi_{0},\pi)$ for fixed $\phi_{0}>0$,
\begin{equation}
\label{eq36}
I_1(\alpha,\beta,\phi;n)
= \frac{C_1(\alpha,\phi)\,\Gamma(1-\beta)}{n^{1-\beta}}
\left\{1+\frac{(\alpha-\beta)(1-\beta)}{2n}\right\}
+ E_{1}(\alpha,\beta,\phi;n),
\end{equation}
and
\begin{equation}
\label{eq37}
I_2(\alpha,\beta,\phi;n)
= -\frac{C_2(\beta,\phi)\,\Gamma(1+\alpha)}{n^{1+\alpha}}
\left\{1+\frac{(\alpha-\beta)(1+\alpha)}{2n}\right\}
+ E_{2}(\alpha,\beta,\phi;n),
\end{equation}
where
\begin{equation}
\label{eq38}
E_{1}(\alpha,\beta,\phi;n)
= C_1(\alpha,\phi)\int_0^\infty s^{2-\beta}  K_1(\alpha,\beta,\phi;s)e^{-ns}\,ds,
\end{equation}
and
\begin{equation}
\label{eq39}
E_{2}(\alpha,\beta,\phi;n)
= -C_2(\beta,\phi)\int_0^\infty s^{2+\alpha}  K_2(\alpha,\beta,\phi;s)e^{-ns}\,ds.
\end{equation}
\end{lemma}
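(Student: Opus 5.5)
This is essentially an exact identity: the remainders $E_1,E_2$ are defined so that \cref{eq36,eq37} hold by construction. The plan is to invert the definitions of $K_1,K_2$ to rewrite the integrands of \cref{eq27,eq28}, and then integrate the explicit leading terms against $e^{-ns}$ using Euler's gamma integral.

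For $I_1$, solve \cref{eq34} for the integrand factor. Since $(e^s-1)^{-\beta} = s^{-\beta}\{s/(e^s-1)\}^{\beta}$, we get for $s>0$
\begin{equation*}
\Delta^{-2}(e^s-1)^{-\beta}\left\{(e^s+1)^\alpha - R(s,\phi)^\alpha\right\}
= C_1(\alpha,\phi)\, s^{-\beta}\left\{1+\tfrac{\alpha-\beta}{2}s + s^2K_1(\alpha,\beta,\phi;s)\right\}.
\end{equation*}
Here the denominator $2^\alpha-\{2\sin(\tfrac12\phi)\}^\alpha$ is nonzero because $\phi<\pi$; this is where $\phi\in[\phi_0,\pi)$ is used. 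Multiply by $e^{-ns}$ and integrate over $(0,\infty)$, using $\int_0^\infty s^{\nu-1}e^{-ns}\,ds=\Gamma(\nu)n^{-\nu}$ with $\nu=1-\beta$ and $\nu=2-\beta$. The relation $\Gamma(2-\beta)=(1-\beta)\Gamma(1-\beta)$ then gives exactly the main term of \cref{eq36}, and the $K_1$ term is $E_1$ as in \cref{eq38}.

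The $I_2$ case is the same. Write $(e^s-1)^\alpha = s^\alpha\{(e^s-1)/s\}^\alpha$ and note $(e^s+1)^{-\beta}-R^{-\beta} = -\left[R^{-\beta}-(e^s+1)^{-\beta}\right]$. This produces $-C_2(\beta,\phi)s^{\alpha}\{1+\tfrac{\alpha-\beta}{2}s+s^2K_2\}$, and integrating with $\nu=1+\alpha$ and $\nu=2+\alpha$ gives \cref{eq37,eq39}. One also checks that $C_2\neq0$, which holds because $\sin(\tfrac12\phi)<1$ for $\phi<\pi$, and that $\sin(\tfrac12\phi)>0$ for $\phi\ge\phi_0>0$, so $C_2$ is finite.

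The only substantive point is convergence: the split into three separate integrals must be legitimate, so each piece has to converge on its own. The main-term integrals converge because $1-\beta>0$ and $1+\alpha>0$; this is why $\alpha,\beta\in(0,1)$ is assumed, and $\beta=1$ is excluded since $\Gamma(1-\beta)$ diverges. For the remainder integrals, the behaviour at $s\to\infty$ is harmless for $n\ge1$. The integrand growths are at most $e^{(\alpha-\beta)s}$ and $e^{(\alpha-\beta)s}$ up to polynomial factors, and $\alpha-\beta<1\le n$, so the decay of $e^{-ns}$ dominates.

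At $s\to0^+$, I would verify that $K_1$ and $K_2$ stay bounded, or at worst are integrable against $s^{2-\beta}$ and $s^{2+\alpha}$. Boundedness would hold if the expression in square brackets in \cref{eq34,eq35} is $\mathcal{O}(s^2)$. With my normalisation, however, the bracket is only guaranteed to vanish at $s=0$: the linear coefficient $(\alpha-\beta)/2$ need not match the actual Taylor coefficient, because $R(s,\phi)$ contributes a $\phi$-dependent linear term. So I would not rely on exact cancellation. Even so, $s^{2-\beta}K_1=\mathcal{O}(s^{1-\beta})$ and $s^{2+\alpha}K_2=\mathcal{O}(s^{\alpha+1})$ near $0$, both of which are integrable. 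Expanding $R(s,\phi)^{\pm}$ and $(e^s\pm1)$ about $s=0$ shows the brackets are smooth, with values $0$ at $s=0$, so the only singularity comes from the $1/s^2$ prefactor times a bracket that is $\mathcal{O}(s)$, which is integrable as above. I expect this small-$s$ analysis, keeping track of smoothness uniformly in $\phi\ge\phi_0$, to be the only mildly delicate step; it is routine Taylor expansion.
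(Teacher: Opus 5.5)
Your proof is correct and is the paper's argument: you rewrite the integrands of \cref{eq27,eq28} as $C_1(\alpha,\phi)s^{-\beta}\{1+\tfrac12(\alpha-\beta)s+s^2K_1\}$ and $-C_2(\beta,\phi)s^{\alpha}\{1+\tfrac12(\alpha-\beta)s+s^2K_2\}$, then integrate termwise using \cref{eq54}.

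Your aside about the linear coefficient is mistaken, although it is harmless for this exact identity. Write $\rho=2\sin(\tfrac12\phi)$ and $u=4\sinh^2(\tfrac12 s)$. Then $R(s,\phi)=e^{s/2}(\rho^2+u)^{1/2}$, $e^s+1=e^{s/2}(4+u)^{1/2}$ and $e^s-1=2e^{s/2}\sinh(\tfrac12 s)$. Hence both normalised products in \cref{eq34,eq35} equal $e^{(\alpha-\beta)s/2}\{1+\mathcal{O}(s^2)\}$, so the brackets are $\mathcal{O}(s^2)$ and $K_{1,2}=\mathcal{O}(1)$, as claimed in \cref{eq40}. That stronger bound is what later steps such as \cref{eq69} and $L_0(\alpha,\beta,\phi;0)=0$ actually need.
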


\begin{remark}
We have
\begin{equation}
\label{eq40}
K_1(\alpha,\beta,\phi;s),\ K_2(\alpha,\beta,\phi;s) = \mathcal{O}(1)
\quad\text{as }s\to0,
\end{equation}
uniformly for $\alpha,\beta \in (0,1)$ and $\phi \in [\phi_{0},\pi)$. In particular, all small-$s$
asymptotics are uniform in $\phi$ up to $\phi\to\pi$. The following limits as $\phi \to \pi$ and $\alpha+\beta \to 0$ also apply:
\begin{equation}
\label{eq41}
K_1(\alpha,\beta,\pi;s)
=\frac{1}{s^{2}}\Biggl[
\left(\frac{s}{e^{s}-1}\right)^{\beta}
\frac{4e^{s}}{(e^{s}+1)^{2}}
\left(\frac{e^{s}+1}{2}\right)^{\alpha}
-1-\frac{\alpha-\beta}{2}\,s
\Biggr],
\end{equation}
\begin{equation}
\label{eq42}
K_2(\alpha,\beta,\pi;s)
=\frac{1}{s^{2}}\Biggl[
\left(\frac{e^{s}-1}{s}\right)^{\alpha}
\frac{4e^{s}}{(e^{s}+1)^{2}}
\left(\frac{2}{e^{s}+1}\right)^{\beta}
-1-\frac{\alpha-\beta}{2}\,s
\Biggr],
\end{equation}
and
\begin{equation}
\label{eq43}
K_1(0,0,\phi;s)
=K_2(0,0,\phi;s)
=\frac{1}{s^{2}}\left[ 
\frac{\ln\left\{\left(e^{s}+1\right)/R(s,\phi)
\right\}}
{\ln\left\{\csc(\tfrac12 \phi)\right\}}
-1 \right]
\quad (0<\phi<\pi).
\end{equation}
Furthermore, 
\begin{equation}
\label{eq44}
K_1(\alpha,\beta,\phi;s)
=-\frac{\alpha-\beta}{2s}-\frac{1}{s^{2}}
+\mathcal{O}\left(s^{\beta-2}e^{-(1-\alpha+\beta)s}\right),
\end{equation}
and
\begin{equation}
\label{eq45}
K_2(\alpha,\beta,\phi;s)
=-\frac{\alpha-\beta}{2s}-\frac{1}{s^{2}}
+\mathcal{O}\left(s^{-\alpha-2}e^{-(1-\alpha+\beta)s}\right),
\end{equation}
as $s\to\infty$.

The coefficients $C_1(\alpha,\phi)$ and $C_2(\beta,\phi)$ defined in \cref{eq32,eq33} remain smooth and strictly positive for $\phi\in[\phi_{0},\pi)$ and $\alpha,\beta \in (0,1)$,
and have finite nonnegative limits as $\phi\to\pi$; in particular
\begin{equation}
\label{eq46}
C_1(\alpha,\phi)=\frac{2^\alpha\,\alpha}{8}
+ \mathcal{O}\left\{(\phi - \pi)^{2}\right\},
\end{equation}
and
\begin{equation}
\label{eq47}
C_2(\beta,\phi)=\frac{2^{-\beta}\,\beta}{8}
+ \mathcal{O}\left\{(\phi - \pi)^{2}\right\},
\end{equation}
as $\phi \to \pi$.
\end{remark}

\begin{proof}
From \cref{eq27,eq28}
\begin{equation}
\label{eq48}
I_j(\alpha,\beta,\phi;n)
=\int_0^\infty 
G_j(\alpha,\beta,\phi;s) 
e^{-ns}ds
\quad (j=1,2),
\end{equation}
where
\begin{equation}
\label{eq49}
G_1(\alpha,\beta,\phi;s)
= \frac{(e^{s}+1)^\alpha 
- R(s,\phi)^\alpha}{\Delta^2(e^s-1)^\beta},
\end{equation}
and
\begin{equation}
\label{eq50}
G_2(\alpha,\beta,\phi;s)
= \frac{(e^s-1)^{\alpha}}{\Delta^2}
\left[(e^{s}+1)^{-\beta}-R(s,\phi)^{-\beta}\right].
\end{equation}
Using a Taylor expansion at $s=0$ and the definition \cref{eq32,eq33} one finds that, as $s \to 0$,
\begin{equation*}
s^{\beta}G_1,\,
s^{-\alpha}G_2
= \pm C_{1,2}\left\{ 
1+ \tfrac12 (\alpha-\beta)s
+ \mathcal{O}\left(s^{2}\right) \right\},
\end{equation*}
uniformly for $\alpha,\beta \in (0,1)$ and $\phi\in[\phi_{0},\pi)$. We therefore can write the integrands of \cref{eq48} in the forms
\begin{equation}
\label{eq52}
G_1(\alpha,\beta,\phi;s)
= C_{1}(\alpha,\phi)s^{-\beta}\left\{1
+\tfrac12(\alpha-\beta)s
+s^{2}K_1(\alpha,\beta,\phi;s)\right\},
\end{equation}
and
\begin{equation}
\label{eq53}
G_2(\alpha,\beta,\phi;s)
=- C_{2}(\beta,\phi)s^{\alpha}\left\{1
+\tfrac12(\alpha-\beta)s
+s^{2}K_2(\alpha,\beta,\phi;s)\right\},
\end{equation}
where the remainder kernels $K_{1,2}(\alpha,\beta,\phi;s)$ are given by \cref{eq34,eq35} and satisfy \cref{eq40}. Substituting \cref{eq52,eq53}  into \cref{eq27,eq28}, and using
\begin{equation}
\label{eq54}
\int_0^\infty s^{\mu} e^{-n s}\,ds
= \frac{\Gamma(1+\mu)}{n^{1+\mu}}
\quad (\Re(n)>0, \Re(\mu)>-1),
\end{equation}
we obtain \cref{eq36,eq37}.
\end{proof}

Let us now consider the RHS of \cref{eq30}, suitably scaled. We introduce the prefactors
\begin{equation}
\label{eq55}
A_1(\alpha,\phi) =\frac{ (1-\alpha)C_1(\alpha,\phi)}
{\sin(\pi\alpha)\Gamma(1+\alpha)}
=\frac{(1-\alpha)\left[ 2^\alpha 
- \left\{2\sin(\tfrac12 \phi)\right\}^\alpha \right]}
{\sin(\pi\alpha)\Gamma(1+\alpha)(\pi-\phi)^2},
\end{equation}
and
\begin{equation}
\label{eq56}
A_2(\alpha,\beta,\phi) 
= \frac{ (1-\alpha)C_2(\beta,\phi)}
{\sin(\pi\beta)\Gamma(1-\beta)}
=\frac{(1-\alpha)
\Gamma(\beta)
\left[\left\{2\sin(\tfrac12 \phi)
\right\}^{-\beta}- 2^{-\beta}\right]}
{\pi(\pi-\phi)^2}.
\end{equation}
Then, on combining \cref{eq29,eq36,eq37,eq38,eq39,eq55,eq56}, one arrives at:

\begin{theorem}
For $\alpha,\beta \in [0,1]$ and $\phi \in (0,\pi]$    
\begin{equation}
\label{eq57}
\mathcal{H}(\alpha,\beta,\phi;n)
= \mathcal{H}^{(2)}(\alpha,\beta,\phi;n)
+\mathcal{E}_{0}(\alpha,\beta,\phi;n)
+\mathcal{E}_{\infty}(\alpha,\beta,\phi;n),
\end{equation}
where
\begin{equation}
\label{eq58}
\mathcal{H}(\alpha,\beta,\phi;n)
= \frac{(1-\alpha)n^{1+\alpha-\beta}h(\alpha,\beta,\phi;n)}
{(\alpha+\beta)\sin(\pi\alpha)
\sin(\pi\beta)\Gamma(1-\beta)\Gamma(1+\alpha)},
\end{equation}
\begin{multline}
\label{eq59}
\mathcal{H}^{(2)}(\alpha,\beta,\phi;n)
= \frac{1}{\alpha+\beta}\left[
A_1(\alpha,\phi) n^\alpha
\left\{1+\frac{(\alpha-\beta)(1-\beta)}{2n}\right\}
\right.
\\
\left.
- \frac{A_2(\alpha,\beta,\phi)}{n^{\beta}} 
\left\{1+\frac{(\alpha-\beta)(1+\alpha)}{2n}\right\}\right],
\end{multline}
\begin{equation}
\label{eq60}
\mathcal{E}_{0}(\alpha,\beta,\phi;n)
= n^{1+\alpha-\beta} \int_0^1 
L(\alpha,\beta,\phi;s)e^{-ns}ds,
\end{equation}
and
\begin{equation}
\label{eq61}
\mathcal{E}_{\infty}(\alpha,\beta,\phi;n)
= n^{1+\alpha-\beta} \int_1^\infty L(\alpha,\beta,\phi;s)
e^{-ns}ds,
\end{equation}
in which
\begin{equation}
\label{eq62}
L(\alpha,\beta,\phi;s)
= \frac{s^2}{\alpha+\beta}
\left\{\frac{A_1(\alpha,\phi)
K_1(\alpha,\beta,\phi;s)}
{\Gamma(1-\beta)s^{\beta}}
- \frac{s^{\alpha}A_2(\alpha,\beta,\phi)
K_2(\alpha,\beta,\phi;s)}
{\Gamma(1+\alpha)}\right\},
\end{equation}
and in all cases (finite) limits applying when $\alpha \to 0,1$, $\beta \to 0,1$ and/or $\phi \to \pi$. In \cref{eq62} $K_1(\alpha,\beta,\phi;s)$ and $K_2(\alpha,\beta,\phi;s)$ are given by \cref{eq26,eq34,eq35}.
\end{theorem}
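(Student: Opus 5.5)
This is a direct algebraic consequence of \cref{eq29}, \cref{lem:I1I2-expansion} and the definitions \cref{eq55,eq56}. The plan is to do the algebra first for $\alpha,\beta\in(0,1)$ and $\phi\in(0,\pi)$, where every quantity is finite and nonzero, and then extend to the closed ranges by continuity.

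\emph{Step 1: substitute and regroup.} Insert \cref{eq36,eq37} into $h=\sin(\pi\beta)I_1+\sin(\pi\alpha)I_2$, multiply by the prefactor in \cref{eq58}, and sort terms by origin.
\begin{itemize}
\item The main term of $\sin(\pi\beta)I_1$ is $\sin(\pi\beta)C_1\Gamma(1-\beta)n^{\beta-1}\{\cdots\}$. Multiplied by $(1-\alpha)n^{1+\alpha-\beta}/\{(\alpha+\beta)\sin(\pi\alpha)\sin(\pi\beta)\Gamma(1-\beta)\Gamma(1+\alpha)\}$, it gives $(\alpha+\beta)^{-1}A_1 n^{\alpha}\{\cdots\}$ by \cref{eq55}.
\item The main term of $\sin(\pi\alpha)I_2$ gives $-(\alpha+\beta)^{-1}A_2 n^{-\beta}\{\cdots\}$ by \cref{eq56}, since $n^{1+\alpha-\beta}n^{-1-\alpha}=n^{-\beta}$.
\end{itemize}
Together these reproduce \cref{eq59}.

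\emph{Step 2: the remainder terms.} By \cref{eq38}, $\sin(\pi\beta)E_1$ scaled in the same way equals
\[
n^{1+\alpha-\beta}\int_0^\infty \frac{A_1 s^{2-\beta}K_1}{(\alpha+\beta)\Gamma(1-\beta)}\,e^{-ns}\,ds .
\]
By \cref{eq39}, $\sin(\pi\alpha)E_2$ gives the analogous term with $-A_2s^{2+\alpha}K_2/\{(\alpha+\beta)\Gamma(1+\alpha)\}$. Their sum is $n^{1+\alpha-\beta}\int_0^\infty L\,e^{-ns}\,ds$ with $L$ as in \cref{eq62}. Splitting the range at $s=1$ gives $\mathcal{E}_0+\mathcal{E}_\infty$, which establishes \cref{eq57} on the open ranges.

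\emph{Step 3: extension to the boundary (main obstacle).} This is where the real care is needed. We must show that each of $A_1$, $A_2$, $K_1$, $K_2$ and the prefactor in \cref{eq58} has a finite limit as $\alpha\to0,1$, $\beta\to0,1$ or $\phi\to\pi$, and that we may pass to the limit under the integrals. The individual limits are as follows.
\begin{itemize}
\item $A_1$ is $0/0$ at $\alpha=0$ but has a finite limit, since $2^\alpha-(2\sin\frac12\phi)^\alpha\sim\alpha\ln\csc\frac12\phi$. As $\alpha\to1$ it is also finite, because $(1-\alpha)/\sin(\pi\alpha)\to1/\pi$.
\item $A_2$ is manifestly finite except at $\beta=0$, where $\Gamma(\beta)\{(2\sin\frac12\phi)^{-\beta}-2^{-\beta}\}\to\ln\csc\frac12\phi$.
\item As $\phi\to\pi$, use \cref{eq46,eq47}.
\item The factor $(\alpha+\beta)^{-1}$ is handled by noting that both bracketed pieces vanish to first order as $\alpha,\beta\to0$; \cref{eq43} gives the limiting kernels.
\item On the left side, $h/\{\sin(\pi\alpha)\sin(\pi\beta)\}$ has finite limits by \cref{eq13,eq14} and \cref{eq27,eq28}.
\end{itemize}
Passage to the limit under the integrals is by dominated convergence. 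On $[0,1]$ this uses the uniform bound \cref{eq40}. On $[1,\infty)$ it uses the large-$s$ behaviour \cref{eq44,eq45}, which makes $L$ grow at most polynomially, so that $e^{-ns}$ dominates for $n\ge1$. Both sides of \cref{eq57} are continuous on the open set and have limits, so the identity persists at the boundary values with the understood limiting interpretations.
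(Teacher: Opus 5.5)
Your Steps 1 and 2 are exactly the paper's proof. The paper obtains the theorem in one line, by substituting \cref{eq36,eq37,eq38,eq39} into \cref{eq29} and using \cref{eq55,eq56}, and your bookkeeping of the prefactors is correct. The paper only asserts the boundary limits, so Step 3 goes beyond it. But you present Step 3 as the main obstacle, and as written it does not settle the corner $\alpha=\beta=0$.

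\textbf{The corner $\alpha=\beta=0$.} The vanishing of the brackets in \cref{eq59,eq62} at $(\alpha,\beta)=(0,0)$ (from \cref{eq63,eq66,eq43}) only shows that bracket$/(\alpha+\beta)$ is bounded. A bracket $a\alpha+b\beta+\mathcal{O}(\alpha^2+\beta^2)$ divided by $\alpha+\beta$ has a direction-dependent limit unless $a=b$. Likewise \cref{eq40} bounds $K_1,K_2$, not $L$, which carries the factor $(\alpha+\beta)^{-1}$, so it gives no dominating function near the corner.

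The missing idea is an antisymmetry under $(\alpha,\beta)\mapsto(-\beta,-\alpha)$. Put $c(a)=2^{a}-\{2\sin(\tfrac12\phi)\}^{a}$, so that $\Delta^2C_1=c(\alpha)$ and $\Delta^2C_2=-c(-\beta)$. Then the bracket in \cref{eq59} equals $(1-\alpha)\Delta^{-2}\{\chi(\alpha,\beta)-\chi(-\beta,-\alpha)\}$, where $\chi(a,b)=c(a)\,n^{a}\{1+(a-b)(1-b)/(2n)\}/\{\sin(\pi a)\Gamma(1+a)\}$. Similarly
\[
(\alpha+\beta)L=\frac{(1-\alpha)\{\Omega(\alpha,\beta)-\Omega(-\beta,-\alpha)\}}{\Gamma(1+\alpha)\Gamma(1-\beta)\Delta^{2}},
\qquad
\Omega(a,b)=\frac{(e^{s}-1)^{-b}\{(e^{s}+1)^{a}-R^{a}\}-s^{-b}c(a)\{1+\tfrac12(a-b)s\}}{\sin(\pi a)},
\]
with $\chi$ and $\Omega$ analytic near the origin. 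These differences vanish on the whole line $\alpha+\beta=0$. Hence each equals $(\alpha+\beta)$ times the average of $(\partial_a+\partial_b)\chi$ (respectively $\Omega$) along the segment from $(-\beta,-\alpha)$ to $(\alpha,\beta)$.

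This gives the joint limits, which reproduce \cref{eq70} and the first term of \cref{eq95}. It also gives the majorant you need. Since $\Omega(a,b)=s^{2-b}c(a)K_1(a,b,\phi;s)/\sin(\pi a)$ and differentiating $s^{-b}$ produces $\ln s$, one gets a bound of the form $Cs^{2-\beta}(1+|\ln s|)$ on $[0,1]$ and polynomial growth on $[1,\infty)$.

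\textbf{The left-hand side.} Your claim that $h/\{\sin(\pi\alpha)\sin(\pi\beta)\}$ has finite limits is false at $\alpha=1$ and at $\beta=1$. For $\phi<\pi$, $I_1(1,\beta,\phi;n)>0$ while $\sin(\pi\alpha)\to0$, and $I_1$ grows like $C_1\Gamma(1-\beta)n^{\beta-1}$ as $\beta\to1$. Only with the factors $(1-\alpha)$ and $1/\Gamma(1-\beta)$ of \cref{eq58} is the quantity finite. Also, \cref{eq13,eq14} are limits of $w_n$, not of $h$, so they do not bear on this. You do not need this claim, however: once the right side of \cref{eq57} has limits, the identity on the open set forces the left side to have the same limits.
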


Note, from \cref{eq55,eq56},
\begin{equation}
\label{eq63}
A_1(0,\phi)
=\frac{\ln\left\{\csc(\tfrac12 \phi)\right\}}
{\pi(\pi-\phi)^2},
\end{equation}
\begin{equation}
\label{eq64}
A_1(1,\phi) =\frac{2
\left\{1 - \sin(\tfrac12 \phi)\right\}}
{\pi\Delta^2},
\end{equation}
\begin{equation}
\label{eq65}
A_1(\alpha,\pi)
=\frac{2^{\alpha}\Gamma(2-\alpha)}{8\pi},
\end{equation}
\begin{equation}
\label{eq66}
A_2(\alpha,0,\phi)
=\frac{(1-\alpha)\ln\left\{\csc(\tfrac12 \phi)\right\}}
{\pi(\pi-\phi)^2},
\end{equation}
\begin{equation}
\label{eq67}
A_2(\alpha,1,\phi)
=\frac{ (1-\alpha)
\left\{\csc(\tfrac12 \phi)
- 1\right\}}{2\pi \Delta^2},
\end{equation}
and
\begin{equation}
\label{eq68}
A_2(\alpha,\beta,\pi)
=\frac{(1-\alpha)\Gamma(1+\beta)}{8\pi\,2^{\beta}},
\end{equation}
and $L(\alpha,\beta,\pi;s)$ follows from \cref{eq41,eq42,eq62,eq65,eq68}.

Furthermore, using the $\mathcal{O}(1)$ behaviour of $K_1(\alpha,\beta,\phi;s)$ and $K_2(\alpha,\beta,\phi;s)$ as $s\to0$, one finds that
\begin{equation}
\label{eq69}
L(\alpha,\beta,\phi;s)
= \mathcal{O}\left(s^{2-\beta}\right)
\quad  (s \to 0),
\end{equation}
for $\alpha,\beta  \in (0,1]$ and $\phi \in [\phi_{0},\pi]$. Note from \cref{eq56} and $1/\Gamma(0)=0$ that  $L(1,1,\phi;s)=0$. Moreover the limit $L(0,0,\phi;s)$ exists and is finite for all $\phi\in[\phi_{0},\pi)$ and bounded
$s \ge 0$. In particular, we have
\begin{multline}
\label{eq70}
L(0,0,\phi;s)
=\frac{1}{2\pi(\pi-\phi)^2}
\Biggl[2\ln\left\{\frac{R(s,\phi)}{e^{s}+1}\right\}
\ln\left(e^{s}-1\right)
+\ln^{2}\left(e^{s}+1\right)
\biggr. \\ \left.
-\ln^{2}\left\{R(s,\phi)\right\}
+\ln\left\{\sin(\tfrac{1}{2}\phi)\right\}
\ln\left\{\frac{4\sin(\tfrac{1}{2}\phi)}
{s^{2}}\right\}
\right]
\\
=\frac{\left\{1+\cos(\phi)\right\}s^{2}\ln(s)}
{8\pi\left\{1-\cos(\phi)\right\}(\pi-\phi)^2}
+\mathcal{O}\left(s^{2}\right)
\quad (s \to 0),
\end{multline}
where $R(s,\phi)$ is given by \cref{eq26}. From this
\begin{multline}
\label{eq71}
L(0,0,\pi;s)
=\frac{1}{8\pi}
\left[\mathrm{sech}^{2}(\tfrac{1}{2}s)
\ln\left\{\coth(\tfrac{1}{2}s)\right\}
+\ln(\tfrac{1}{2}s)
\right]
\\
=\frac{s^{2} \ln(s)}{32\pi}
+\mathcal{O}\left(s^{2}\right)
\quad (s \to 0).
\end{multline}

We also note from \cref{eq25,eq26,eq34,eq35,eq62,eq64,eq67} that
\begin{equation}
\label{eq172}
L(0,1,\phi;s)
=\frac{1}{\pi(\pi-\phi)^2}\left\{
\frac{\csc(\tfrac12 \phi)-1}{2}
\left(1-\frac{s}{2}\right)
-\frac{1}{R(s,\phi)}
+\frac{1}{e^{s}+1}
\right\},
\end{equation}
\begin{equation}
\label{eq173}
L(1,0,\phi;s)
=\frac{1}{\pi(\pi-\phi)^2}
\left\{\sin(\tfrac12 \phi)(s+2)
- R(s,\phi)+e^{s}-s-1
\right\},
\end{equation}
and from \cref{eq44,eq45,eq62} 
\begin{equation}
\label{eq72}
L(\alpha,\beta,\phi;s)
\sim
\frac{\alpha-\beta}{2(\alpha+\beta)}\left\{
\frac{A_2(\alpha,\beta,\phi)}{\Gamma(1+\alpha)}\,s^{1+\alpha}
-\frac{A_1(\alpha,\phi)}{\Gamma(1-\beta)}\,s^{1-\beta}
\right\}
\quad (s\to\infty).
\end{equation}

Our task now is to bound, from below, the remainder terms $\mathcal{E}_{0}(\alpha,\beta,\phi;n)$ and $\mathcal{E}_{\infty}(\alpha,\beta,\phi;n)$. The following constants play an important role in this.

\begin{proposition}
\label{prop:m0minf}
For $\phi_{0}=0.061$ let
\begin{equation}
\label{eq73}
m_{0}
= \inf_{\substack{\alpha,\beta \in (0,1)\\ 
\phi \in [\phi_{0},\pi),\ s \in [0,1]}}
 L_0(\alpha,\beta,\phi;s),
\end{equation}
where
\begin{equation}
\label{eq74}
 L_0(\alpha,\beta,\phi;s)
=s^{-1+\beta}\,L
(\alpha,\beta,\phi;s),
\end{equation}
with $ L_0(\alpha,\beta,\phi;0)=\lim_{s \to 0}  L_0(\alpha,\beta,\phi;s)=0$, and
\begin{equation}
\label{eq75}
m_{\infty}
= \inf_{\substack{\alpha,\beta \in (0,1)\\ 
\phi \in [\phi_{0},\pi),\ s \in [1,\infty)}}
 L_\infty(\alpha,\beta,\phi;s),
\end{equation}
where
\begin{equation}
\label{eq76}
 L_\infty(\alpha,\beta,\phi;s)
=s^{-2-\alpha}\,L(\alpha,\beta,\phi;s).
\end{equation}
Then
\begin{equation}
\label{eq77}
m_0 = -0.0976382271\cdots,
\end{equation}
attained at $\alpha=0$, $\beta=0$, $\phi=\phi_{0}$,
$s=s_0:=0.0215923350\cdots$ (cf. \cref{eq70}), and
\begin{equation}
\label{eq78}
m_{\infty} = -0.03332478558 \cdots,
\end{equation}
attained at $\alpha=0$, $\beta=1$, $\phi=\phi_{0}$,
$s=s_\infty:=3.999154909\cdots$ (cf. \cref{eq172}). 
\end{proposition}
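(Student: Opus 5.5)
The proposition is a computational statement: the infima of two explicit, smooth functions over compact parameter boxes, with the extremal points identified. The plan is to reduce each infimum to a finite, certified numerical minimisation.

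\textbf{Step 1: compactify and extend by continuity.} I will extend $L_0$ and $L_\infty$ continuously to the closed box $\alpha,\beta\in[0,1]$, $\phi\in[\phi_0,\pi]$, using the limits already recorded:
\begin{itemize}
\item \cref{eq41,eq42,eq65,eq68} give the boundary $\phi=\pi$;
\item \cref{eq63,eq64,eq66,eq67} give the boundaries $\alpha\in\{0,1\}$ and $\beta\in\{0,1\}$;
\item \cref{eq70,eq71,eq172,eq173} give the corner and edge cases;
\item \cref{eq69} shows $L_0=\mathcal{O}(s)$ near $s=0$, so $L_0(\cdot;0)=0$ is the continuous extension there.
\end{itemize}
The only non-compact direction is $s\to\infty$ for $L_\infty$. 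By \cref{eq72}, $L_\infty(\alpha,\beta,\phi;s)\sim \frac{\alpha-\beta}{2(\alpha+\beta)}\bigl\{A_2/\Gamma(1+\alpha)\,s^{-1}-A_1/\Gamma(1-\beta)\,s^{-1-\alpha-\beta}\bigr\}$, which tends to $0$. Using the explicit exponentially small remainders in \cref{eq44,eq45}, I will choose a cutoff $S$ (say $S=20$) with $L_\infty>m_\infty/2$ for $s\ge S$, uniformly on the box. The infimum is then attained on a compact set, and since it is negative it lies in $s\in[1,S]$.

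\textbf{Step 2: certified minimisation.} On each compact domain I will run a dense grid search, followed by local refinement (Newton's method on the gradient). This locates the minimisers at $(\alpha,\beta,\phi)=(0,0,\phi_0)$ with $s=s_0$, and $(0,1,\phi_0)$ with $s=s_\infty$. To make this rigorous I will bound the partial derivatives of $L_0$ and $L_\infty$ on the box; these are elementary, apart from $\Gamma$ on $(0,2]$. A grid of mesh $h$ whose minimum value exceeds the candidate $m$ by more than $Mh$, where $M$ is the derivative bound, then certifies the global infimum up to that tolerance. Alternatively, interval arithmetic on subboxes would serve the same purpose.

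\textbf{Step 3: confirm the extremal values exactly.} At the extremal edges the functions reduce to one-variable closed forms. By \cref{eq70}, $L_0(0,0,\phi_0;s)=s^{-1}L(0,0,\phi_0;s)$. By \cref{eq172}, $L_\infty(0,1,\phi_0;s)=s^{-2}L(0,1,\phi_0;s)$. Differentiating these in $s$ and solving for the critical points gives $s_0$, $s_\infty$ and the stated values \cref{eq77,eq78} to the displayed precision.

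\textbf{Main obstacle.} The hardest part is making the grid search trustworthy near the removable singularities, at $s\to0$, $\alpha,\beta\to0,1$ and $\phi\to\pi$. There the formulas \cref{eq34,eq35} suffer cancellation, and derivative bounds can blow up; the $s^2\ln s$ term in \cref{eq70} is one example. I would handle these regions by switching to series expansions in thin boundary layers, and show directly that in those layers the function stays well above $m_0$ (respectively $m_\infty$). The derivative-bound argument would then be applied only on the interior.
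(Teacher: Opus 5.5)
Your outline is essentially the paper's computation with certification added. The paper also extends $L_0$ and $L_\infty$ by the limiting formulas and samples $(\alpha,\beta,\phi)$ on a refined grid, solving exactly for critical points in $s$ at each grid point. It truncates the $s$-range for $m_\infty$ at $30$, justified only by running \texttt{Optimization[Minimize]} on $[30,S]$ with $S\le 10^5$, which returns $-0.00829\ldots$ at $s=30$. It cross-checks with a global optimiser and never produces a rigorous enclosure.

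The step of yours that would fail is the boundary-layer treatment. You propose to excise thin layers near $\alpha,\beta\in\{0,1\}$ by showing that $L_0$ (resp.\ $L_\infty$) stays well above $m_0$ (resp.\ $m_\infty$) there, and to apply derivative bounds only in the interior. But the minimisers you locate in Step~2 sit exactly at the corners $(\alpha,\beta)=(0,0)$ for $m_0$ and $(0,1)$ for $m_\infty$. So that inequality is false in precisely the layers that matter. What you need there is a lower bound that stays sharp up to the corner, and one is available. The prefactors $A_1$, $A_2$ in the forms \cref{eq55,eq56}, the factor $1/\Gamma(1-\beta)$, and the ratios $\{(e^s+1)^\alpha-R^\alpha\}/\{2^\alpha-\rho^\alpha\}$ and $\{R^{-\beta}-(e^s+1)^{-\beta}\}/\{\rho^{-\beta}-2^{-\beta}\}$, with $\rho=2\sin(\tfrac12\phi)$, all continue analytically to $\alpha,\beta\in\{0,1\}$ and $\phi=\pi$. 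The only non-smooth ingredient is $1/(\alpha+\beta)$. It multiplies a function whose linear part at $(0,0)$ is a multiple of $\alpha+\beta$, which is why the limit \cref{eq70} is direction-independent. Hence the extension is Lipschitz, though not $C^1$, in $(\alpha,\beta,\phi)$ up to the corners. Your derivative-bound argument can therefore run straight through them, with series used only to avoid cancellation in evaluation. Only the layers $s\to0$ and $\phi\to\pi$ can be excised as you describe. Note that near $(\alpha,\beta)=(0,0)$ one has $L_0\sim c\,s\ln s$ rather than $\mathcal{O}(s)$. Since $s_0\approx0.0216$ is comparable with $\phi_0$, the $s$-layer must be much thinner than $s_0$.

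Two further points. First, a mesh-$h$ grid with Lipschitz constant $M$ only gives $m_0\in[g-Mh,g]$, where $g$ is the grid minimum. The proposition asserts that the infimum is \emph{attained} at $(0,0,\phi_0,s_0)$ (resp.\ $(0,1,\phi_0,s_\infty)$) with ten correct digits. For that you also need a local monotonicity argument in $\alpha$, $\beta$, $\phi$ near the corner, which reduces the claim to the one-variable problem of your Step~3. An enclosure alone would suffice for the later use in \cref{lem:error-bounds}. Second, \cref{eq44,eq45} are bare $\mathcal{O}$-statements, and their decay rate $1-\alpha+\beta$ vanishes at $(\alpha,\beta)=(1,0)$. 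So they do not deliver a uniform explicit cutoff. The cutoff does exist, because $L_\infty=\mathcal{O}(s^{-1})$ uniformly on the box, but you must prove explicit inequalities for it, including control of the $1/(\alpha+\beta)$ cancellation. On the extremal edge, \cref{eq172} gives $L_\infty(0,1,\phi_0;s)\approx-0.2666\,s^{-1}+0.5332\,s^{-2}$ for large $s$, about $-0.012$ at $s=20$. Your choice $S=20$ is therefore consistent there.
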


\begin{remark}
On the compact set $\alpha,\beta \in [0,1]$, $\phi \in [\phi_{0},\pi]$ and $s \in [0,1]$ the function $ L_0(\alpha,\beta,\phi;s)$ is continuous, bounded and $\mathcal{O}(s)$ as $s \to 0$. In \cref{eq74} we introduced the factor $s^{-1+\beta}$ rather than $s^{-2+\beta}$, which guarantees that $ L_0(\alpha,\beta,\phi;s)$ vanishes at $s=0$, and subsequently yields the numerically small (in absolute value) lower bound $m_0$ as shown. The price is that in the error bounds of \cref{lem:error-bounds} below there is a factor $n^{-1+\alpha}$ rather than a smaller term $n^{-2+\alpha}$, but this weaker decay is more than sufficient for $n \ge 5$ which we consider. 

Likewise, we use the factor $s^{-2-\alpha}$ as opposed $s^{-1-\alpha}$ in the definition \cref{eq76} (cf. \cref{eq72}) for the same reason. On the set set $\alpha,\beta \in [0,1]$, $\phi \in [\phi_{0},\pi]$ and $s \in [1,\infty)$ the function $ L_\infty(\alpha,\beta,\phi;s)$ is continuous, bounded and $\mathcal{O}(s^{-1})$ as $s \to \infty$. 

See \cref{fig:m0,fig:minf} for graphs of $L_0(\alpha,\beta,\phi_0;s_0)$ and $L_\infty(\alpha,\beta,\phi_0;s_\infty)$ for $\alpha,\beta \in [0,1]$. In \cite{Dunster:2026:BWM} we provide plots of $L_0(\alpha,\beta,\phi_0;s)$ and $L_0(\alpha,\beta,\pi;s)$ animated for $s \in [0,1]$, and $L_\infty(\alpha,\beta,\phi_0;s)$ and $L_\infty(\alpha,\beta,\pi;s)$ animated for $s \in [1,30]$.
\end{remark}

\begin{figure}
 \centering
 \includegraphics[
 width=0.9\textwidth,keepaspectratio]{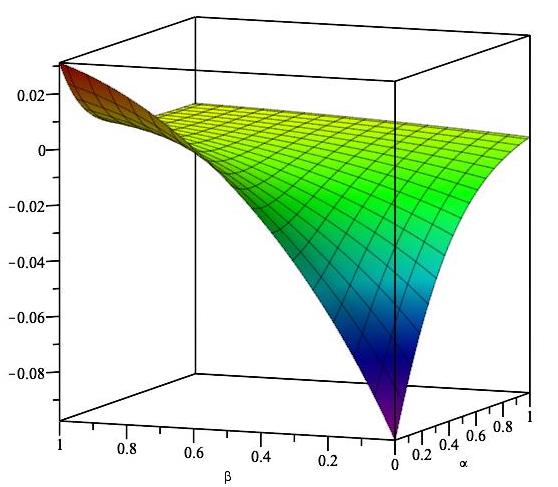}
 \caption{Graph of $ L_0(\alpha,\beta,\phi_0;s_0)$ for $\alpha,\beta \in [0,1]$.}
 \label{fig:m0}
\end{figure}

\begin{figure}
 \centering
 \includegraphics[
 width=0.9\textwidth,keepaspectratio]{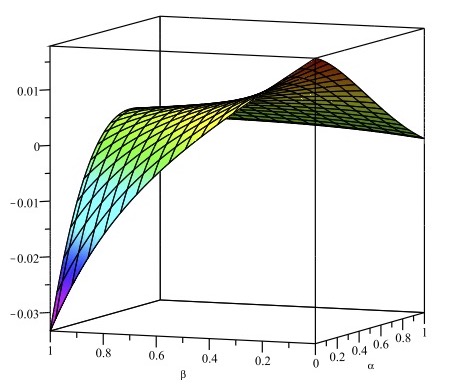}
 \caption{Graph of $ L_\infty(\alpha,\beta,\phi_0;s_\infty)$ for $\alpha,\beta \in [0,1]$.}
 \label{fig:minf}
\end{figure}

Before discussing the computations leading to \cref{eq77,eq78}, we apply these values to obtain the following desired explicit lower bounds.

\begin{lemma}\label{lem:error-bounds}
The error terms in \cref{eq57} satisfy
\begin{equation}
\label{eq81}
\mathcal{E}_{0}(\alpha,\beta,\phi;n)
>
m_0\,\Gamma(2-\beta)\,n^{-1+\alpha},
\end{equation}
and
\begin{equation}
\label{eq82}
\mathcal{E}_{\infty}(\alpha,\beta,\phi;n)
\ge 
m_{\infty} \,\Gamma(3+\alpha,n)\,n^{-2-\beta}.
\end{equation}
uniformly for $n>0$, $\alpha,\beta \in [0,1]$ and $\phi \in [\phi_{0},\pi]$, where $\phi_{0}=0.061$.
\end{lemma}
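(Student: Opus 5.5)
The plan is to insert the pointwise lower bounds furnished by \cref{prop:m0minf} directly into the integral representations \cref{eq60,eq61}, and then to evaluate the resulting elementary integrals with \cref{eq54} and its incomplete analogue.

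For $\mathcal{E}_0$ we use \cref{eq74}, which gives $L(\alpha,\beta,\phi;s)=s^{1-\beta}L_0(\alpha,\beta,\phi;s)$. We also use $L_0\ge m_0$ on $[0,1]$, which holds on the closed parameter set by continuity and the attained-infimum statement of \cref{prop:m0minf}. Since $e^{-ns}>0$, these give
\[
\mathcal{E}_{0}\ge m_0\, n^{1+\alpha-\beta}\int_0^1 s^{1-\beta}e^{-ns}\,ds .
\]
Because $m_0<0$, enlarging the range of integration to $[0,\infty)$ can only decrease the right-hand side. The omitted piece $\int_1^\infty s^{1-\beta}e^{-ns}ds$ is strictly positive, so the inequality becomes strict. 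By \cref{eq54} with $\mu=1-\beta>-1$, we get $\int_0^\infty s^{1-\beta}e^{-ns}ds=\Gamma(2-\beta)n^{-2+\beta}$. Multiplying by $n^{1+\alpha-\beta}$ gives $m_0\Gamma(2-\beta)n^{-1+\alpha}$, which is \cref{eq81}.

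For $\mathcal{E}_\infty$ we write $L=s^{2+\alpha}L_\infty$ and use $L_\infty\ge m_\infty$ on $[1,\infty)$. This gives
\[
\mathcal{E}_\infty\ge m_\infty n^{1+\alpha-\beta}\int_1^\infty s^{2+\alpha}e^{-ns}\,ds .
\]
Substituting $t=ns$, the integral becomes $n^{-3-\alpha}\Gamma(3+\alpha,n)$. Multiplying by $n^{1+\alpha-\beta}$ gives $m_\infty\Gamma(3+\alpha,n)n^{-2-\beta}$, which is \cref{eq82}. This bound is non-strict, since no enlargement of the range is used.

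Uniformity in $n>0$ and in the closed parameter ranges is automatic, because $m_0$ and $m_\infty$ do not depend on $n$, $\alpha$, $\beta$ or $\phi$. The endpoint cases $\alpha,\beta\in\{0,1\}$ and $\phi=\pi$ are covered by the finite limits stated after \cref{eq62}, together with dominated convergence.

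The main obstacle is justifying that the infima in \cref{prop:m0minf}, which are taken over open parameter sets, bound $L_0$ and $L_\infty$ on the closed sets $\alpha,\beta\in[0,1]$, $\phi\in[\phi_0,\pi]$. This follows from the continuity and boundedness asserted in the remark after \cref{prop:m0minf}, since the extremizers lie on the boundary. A second point is the convergence of the integrals at $s\to\infty$, which needs $L=\mathcal{O}(s^{1+\alpha})$ as given by \cref{eq72}; this is harmless against $e^{-ns}$.
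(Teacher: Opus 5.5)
Your proposal is correct and follows the paper's proof essentially step for step: pointwise bounds $L\ge m_0 s^{1-\beta}$ on $[0,1]$ and $L\ge m_\infty s^{2+\alpha}$ on $[1,\infty)$, then extending the first integral to $[0,\infty)$ using $m_0<0$ (which gives the strict inequality), \cref{eq54}, and the substitution $t=ns$ to produce $\Gamma(3+\alpha,n)$. The paper does not remark that the infima over open parameter sets also bound $L_0$ and $L_\infty$ on the closed sets; your continuity argument covers this, and continuity alone suffices without appeal to where the extremizers lie.
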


\begin{proof}
By the definition \cref{eq73} we have under the hypotheses of the lemma 
\begin{equation*}
L(\alpha,\beta,\phi;s)
\;\ge\; m_0\,s^{1-\beta},
\end{equation*}
Inserting this into  \cref{eq60} and using the elementary inequality
\begin{equation}
\label{eq83}
\int_a^b f(s)\,g(s)\,ds
\;\ge\; f_0\int_a^b g(s)\,ds
\quad (f(s) \ge f_0,\, g(s )\ge 0),
\end{equation}
yields, since $m_0<0$,
\begin{equation*}
\mathcal{E}_{0}(\alpha,\beta,\phi;n)
\ge m_0\,n^{1+\alpha-\beta}
\int_0^1 s^{1-\beta}e^{-ns}\,ds
> m_0\,n^{1+\alpha-\beta}
\int_0^\infty s^{1-\beta}e^{-ns}\,ds,
\end{equation*}
and \cref{eq81} then follows from  referring to \cref{eq54}.

Similarly, combining \cref{eq61} with the pointwise lower bound 
\begin{equation*}
L(\alpha,\beta,\phi;s)
\;\ge\; m_{\infty}\,s^{2+\alpha},
\end{equation*}
which comes from \cref{eq75}, and using the same inequality \cref{eq83}, we obtain for $\alpha,\beta \in [0,1]$ and $\phi \in [\phi_{0},\pi]$,
\begin{equation*}
\mathcal{E}_{\infty}(\alpha,\beta,\phi;n)
\ge m_{\infty}\,n^{1+\alpha-\beta}
\int_1^\infty s^{2+\alpha}e^{-ns}\,ds.
\end{equation*}
With the change of variable $s \to s/n$ and referring to the definition of the incomplete gamma function \cite[Eq. 8.2.2]{NIST:DLMF} we arrive at \cref{eq82}.
\end{proof}

\subsection{Numerical computation of \texorpdfstring{$m_0$}{} and \texorpdfstring{$m_\infty$}{}}

We minimized $L_{0}(\alpha,\beta,\phi;s)$ by a direct numerical search in Maple (Digits $=60$) over
$\alpha,\beta\in[0,1]$ and $\phi\in[\phi_{0},\pi]$, with the $s$--dependence handled as follows. We sampled $(\alpha,\beta,\phi)$ on a fine uniform grid (using explicit limiting cases on the boundary faces), with the base grids in $\alpha$ and $\beta$ chosen to be denser near $0$ and $1$, and $\phi$ sampled uniformly on $[\phi_{0},\pi]$. Each cell of the $(\alpha,\beta,\phi)$ grid was refined once by inserting two equally spaced interior points in each coordinate direction.

For each sampled triple $(\alpha,\beta,\phi)$, we regarded $L_{0}(\alpha,\beta,\phi;s)$ as a function of the single variable $s\in[0,1]$, and evaluated it at the endpoint $s=1$ and at any interior critical points $s\in(0,1)$ satisfying $\partial L_{0}(\alpha,\beta,\phi;s)/\partial s=0$. The critical points were located using \texttt{fsolve} on sign-change brackets, based on an exact expression for the $s$-derivative. Since $L_{0}(\alpha,\beta,\phi;0)=0$, the endpoint $s=0$ did not need to be sampled.

The boundary cases $\alpha=0,1$, $\beta=0,1$, and $\phi=\pi$ are removable singularities; these were evaluated stably using the exact limiting formulas in \cref{eq41,eq42,eq43,eq44,eq45,eq62,eq63,eq64,eq65,eq66,eq67,eq68,eq74}.

We carried out a similar grid search for the
normalised kernel $ L_\infty(\alpha,\beta,\phi;s)$ defined in \cref{eq76}, again in Maple with 60-digit arithmetic.
We sampled on the same $\alpha$, $\beta$ and $\phi$ 3-D grid as before, but this time for $s$ at 1 and 30, and at any critical points within $(1,30)$. At each quadruple point we evaluated $ L_\infty(\alpha,\beta,\phi;s)$, storing the current minimum. Over this truncated box, with $s$ taking only the boundary and critical interior values, the minimum recorded value was given by \cref{eq78}.

That we were able to evaluate  $m_{\infty}$ in \cref{eq75} by restricting our search for the infimum of $ L_\infty(\alpha,\beta,\phi;s)$ to $1 \le s \le 30$ is due to the following.
\begin{proposition}
\begin{multline}
\label{eq86}
\inf_{\substack{\alpha,\beta\in(0,1)
\\ \phi \in [\phi_0,\pi),\,s\in[1,30]}} 
\  L_\infty(\alpha,\beta,\phi;s)
=-0.03332478558 \cdots
\\
< \inf_{\substack{\alpha,\beta\in(0,1)
\\ \phi \in [\phi_0,\pi),\,s\in[30,\infty)}} 
 L_\infty(\alpha,\beta,\phi;s)
=-0.008293818653\cdots,
\end{multline}
with the latter infimum attained at at $\alpha=0$, $\beta=1$, $\phi=0.061$, and $s=30$.
\end{proposition}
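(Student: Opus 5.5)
To prove the proposition, I would split $L_\infty$ into its leading large-$s$ part and an exponentially small remainder, and bound the tail $s\ge 30$ analytically. The left-hand infimum over $[1,30]$ is just the grid-plus-critical-point computation already described. The substance is to show that on $[30,\infty)$ the function $L_\infty$ stays above $-0.00829\cdots$, with equality at $s=30$, $\alpha=0$, $\beta=1$, $\phi=\phi_0$.

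\textbf{Step 1 (exact decomposition).} From \cref{eq34,eq35,eq62}, expand $K_1$ and $K_2$ without the $\mathcal{O}$ terms. Writing $s^2K_1 = (s/(e^s-1))^\beta Q_1(s) - 1 - \tfrac12(\alpha-\beta)s$, and similarly for $K_2$, gives
\begin{equation*}
L_\infty=\frac{s^{-2-\alpha}}{\alpha+\beta}\Bigl\{\frac{A_2}{\Gamma(1+\alpha)}s^{\alpha}\bigl(1+\tfrac12(\alpha-\beta)s\bigr)-\frac{A_1}{\Gamma(1-\beta)}s^{-\beta}\bigl(1+\tfrac12(\alpha-\beta)s\bigr)\Bigr\}+T(\alpha,\beta,\phi;s),
\end{equation*}
where $T$ collects the terms containing $(e^s+1)^\alpha-R^\alpha$ and $R^{-\beta}-(e^s+1)^{-\beta}$ divided by powers of $e^s-1$. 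Using $|(e^s+1)^\alpha-R^\alpha|\le \alpha e^{s(\alpha-1)}(1+\cos\phi)$ and the analogous mean-value bound for $R^{-\beta}$, I get $|T|\le c\,s^{\gamma}e^{-(1-\alpha+\beta)s}$ up to bounded prefactors. With $\alpha\le 1$ and $\beta\ge0$, on $s\ge30$ this is at worst $\mathcal{O}(s^{\gamma})$ when $\alpha=1,\beta=0$. At that corner, however, the bracket is regular, since $A_2$ carries the factor $(1-\alpha)$. I would treat that corner using the explicit formula \cref{eq173}, which shows $L(1,0,\phi;s)\ge0$-type growth $e^s$ dominates positively. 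So I expect to split off a neighbourhood of $\alpha=1$ where positivity is evident, and elsewhere bound $T$ by something like $10^{-10}$.

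\textbf{Step 2 (monotonicity of the main part).} Call the main part $M(s)=a s^{-2}+b s^{-1}-c s^{-2-\alpha-\beta}-d s^{-1-\alpha-\beta}$, with explicit nonnegative coefficients depending on $(\alpha,\beta,\phi)$. I would show that for $s\ge30$ each negative piece decreases in modulus, so $M(s)\ge M(30)-(\text{positive pieces})$, or more simply $M(s)\ge -\bigl(c\,s^{-2-\alpha-\beta}+|d|s^{-1-\alpha-\beta}\bigr)$, which is increasing in $s$. This reduces the $[30,\infty)$ infimum to the value at $s=30$ minimized over the compact $(\alpha,\beta,\phi)$ box. That minimum is a single-slice computation of exactly the kind already done for $m_\infty$. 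Checking the sign of $\alpha-\beta$ explains why $\beta=1$, $\alpha=0$ is the worst case: there the $s^{-1}$ term has coefficient $-A_1/(2\Gamma(0))$-type limits, which must be handled via \cref{eq172}. Indeed, at $(0,1)$, \cref{eq172} gives $L_\infty=s^{-2}L(0,1,\phi;s)$, whose leading part is $-\tfrac{s}{4}(\csc\tfrac12\phi-1)/(\pi\Delta^2)$, so $L_\infty\approx -C/s$, which increases toward $0$. At $s=30$, $\phi=\phi_0$, this gives the stated $-0.00829$.

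\textbf{Main obstacle.} The hard part is making Step 2 uniform in $(\alpha,\beta)$ near the degenerate edges $\beta\to1$ (where $1/\Gamma(1-\beta)\to0$) and $\alpha+\beta\to0$ (where the $1/(\alpha+\beta)$ prefactor must cancel against the bracket). I would first try to show directly that $\partial_s\bigl(s\,L_\infty\bigr)$ has a fixed sign on $s\ge30$, using the explicit derivative already employed for the critical-point search. That would give $L_\infty(s)\ge (30/s)L_\infty(30)\ge L_\infty(30)$ whenever $L_\infty(30)<0$, with a separate one-line argument when $L_\infty(30)\ge0$. Comparing the resulting bound with the computed left-hand value then yields \cref{eq86}.
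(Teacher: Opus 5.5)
The paper gives no analytic tail argument here. It runs \texttt{Optimization[Minimize]} on a stabilised form of $L_\infty$ over $s\in[30,S]$ for $S$ up to $10^5$, and appeals to the $\mathcal{O}(s^{-1})$ decay implied by \cref{eq72}. A rigorous bound for all $s\ge30$, as you propose, would be a genuinely stronger route, but both monotonicity mechanisms you rely on fail exactly at the extremal parameters.

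In Step~2 the coefficients are $a=A_2/\{\nu\Gamma(1+\alpha)\}$, $b=\tfrac12(\alpha-\beta)a$, $c=A_1/\{\nu\Gamma(1-\beta)\}$ and $d=\tfrac12(\alpha-\beta)c$, with $\nu=\alpha+\beta$. So $b,d\le0$ whenever $\alpha<\beta$; they are not nonnegative. At $(\alpha,\beta)=(0,1)$ one has $c=d=0$ since $1/\Gamma(0)=0$, and $M(s)=a(s^{-2}-\tfrac12 s^{-1})$. The negative value $-0.00829\cdots$ is produced by the very term $bs^{-1}$ that your bound $M(s)\ge-(cs^{-2-\nu}+|d|s^{-1-\nu})$ discards; that bound would give $M\ge0$ there. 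Your fallback fails too. By \cref{eq172},
\[
s\,L_\infty(0,1,\phi;s)=\frac{\csc(\tfrac12\phi)-1}{2\pi\Delta^2}\left(\frac{1}{s}-\frac{1}{2}\right)+\frac{1}{\pi\Delta^2 s}\left(\frac{1}{e^s+1}-\frac{1}{R(s,\phi)}\right),
\]
where the last term and its derivative are $\mathcal{O}(e^{-2s})$. Hence $\partial_s(sL_\infty)<0$ on $[30,\infty)$, and $L_\infty(s)\ge(30/s)L_\infty(30)$ is false for every $s>30$ at the minimiser.

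More fundamentally, the reduction to the slice $s=30$ cannot be done parameter by parameter, because $L_\infty$ is not monotone in $s$ on $[30,\infty)$ uniformly. For $\beta=1$ the $A_1$ contributions vanish and $L_\infty(\alpha,1,\phi;s)=a\{s^{-2}-\tfrac12(1-\alpha)s^{-1}\}+\mathcal{O}(e^{-(2-\alpha)s})$. Its minimum over $[30,\infty)$ lies at the interior point $s=4/(1-\alpha)$ whenever $\alpha>13/15$; the proposition survives only because these values are of size about $10^{-5}$.

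If you correct the signs and keep only the negative pieces (each increasing in $s$), you do get a valid nondecreasing minorant. But at $(0,1,\phi_0)$ its value at $s=30$ is $-a/60\approx-0.00889$, strictly below the claimed $-0.008293\cdots$. That could serve the paper's purpose of keeping the tail above $m_\infty$, but it cannot prove the stated value or its attainment at $s=30$. For that you need a two-region argument. Near $(0,1,\phi_0)$, keep $as^{-2}$ and use that $s^{-2}-\tfrac12(\beta-\alpha)s^{-1}$ increases for $s\ge4/(\beta-\alpha)$. Elsewhere, show $|L_\infty|$ is well below $0.0083$.

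Finally, your treatment of the corner $(1,0)$ rests on a false claim. Since $e^s-R(s,\phi)\to\cos\phi$, the $e^s$ in \cref{eq173} cancels, and $L(1,0,\phi;s)\sim-(1-\sin\tfrac12\phi)s/(\pi\Delta^2)<0$. The corner is harmless only because $L_\infty=s^{-3}L=\mathcal{O}(s^{-2})$ there. Also, the remainder $T$ is not small on the whole region where $1-\alpha+\beta$ is small, not just near $\alpha=1$: for example, $e^{-30(1-\alpha+\beta)}\approx0.55$ at $(\alpha,\beta)=(0.99,0.01)$.
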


To examine the large-$s$ regime numerically in a stable way, we re-expressed $ L_\infty(\alpha,\beta,\phi;s)$ in terms of exponentials with negative exponent; for example,
\begin{multline*}
\left(\frac{s}{e^s-1}\right)^{\beta}
\left\{(e^{s}+1)^\alpha - R(s,\phi)^\alpha \right\}
\\
=\left(\frac{s}{1-e^{-s}}\right)^{\beta}
e^{(\alpha-\beta)s}
\left\{(1+e^{-s})^\alpha
-\left(1+e^{-2s}-2e^{-s}\cos(\phi)\right)^{1/2}\right\},
\end{multline*}
which appears in \cref{eq76} via \cref{eq26,eq34,eq62}. We then minimized the resulting large $s$ stable form of $ L_{\infty}(\alpha,\beta,\phi;s)$ over $\alpha,\beta\in(0,1)$, $\phi\in[\phi_{0},\pi)$ and $s\in[30,S]$ using Maple's global search (\texttt{Optimization[Minimize]}) (using high precision arithmetic), for various values of $S \in [100,10^{5}]$. In every run, the optimizer returned the same minimum at the lower boundary $s=30$, with the value given in \cref{eq86}. This provides a practical numerical certification that the infimum defining $m_{\infty}$ is achieved within the finite window $1\le s\le 30$, bearing in mind, from \cref{eq72,eq76}, that $ L_\infty(\alpha,\beta,\phi;s)=\mathcal{O}(s^{-1})$ as $s \to \infty$. This is illustrated by our optimization computation for $s=10^5$ giving the value 
\begin{equation}
\label{eq88}
\inf_{\substack{\alpha,\beta \in (0,1)\\ 
\phi \in [\phi_{0},\pi)}}
 L_\infty(\alpha,\beta,\phi;10^{5})
= L_\infty(0,1,\phi_0;10^{5})
=-0.2665816964\times 10^{-5}.
\end{equation}

Finally, we also used \texttt{Optimization[Minimize]} to independently verify the grid results that yielded \cref{eq77,eq78} for $s \in [0,1]$ and $s \in [1,30]$, respectively. The Maple source code for the evaluation of $m_0$ and $m_\infty$ is provided in \cite{Dunster:2026:BWM}.

\section{Proof of \texorpdfstring{\cref{thm:main}}{} for \texorpdfstring{$n=5$}{}}
\label{sec:P5}
Let $w_{n}(\alpha,\beta,x)$ be the scaled hypergeometric function defined by \cref{eq12}, $A_{1,2}(\alpha,\phi)$ be given by \cref{eq55,eq56}, and $m_0$ and $m_\infty$ be the infima given by \cref{eq77,eq78}. Then from \cref{eq30}, the scaling \cref{eq58} together with Euler's reflection identity \cref{eq08}, and the decomposition \cref{eq57,eq59} combined with the bounds \cref{eq81,eq82}, we deduce the following.

\begin{theorem}
\label{thm:Pn}
For $n=1,3,5,\dots$, $\alpha,\beta \in (0,1]$ and $\phi \in [\phi_{0},\pi]$
\begin{equation}
\label{eq89}
w_{n}(\alpha,\beta,-1)-\left| w_{n}(\alpha,\beta,e^{i\phi}) \right|
\geq Q_n(\alpha,\beta,\phi)\,P_{n}(\alpha,\beta,\phi),
\end{equation}
where
\begin{multline}
\label{eq90}
P_{n}(\alpha,\beta,\phi)=\frac{1}{\alpha+\beta}
\left[ A_1(\alpha,\phi)\,n^\alpha
\left\{1+\frac{(\alpha-\beta)(1-\beta)}{2n}\right\}
\right.
\\
\left.
- \frac{A_2(\alpha,\beta,\phi)}{n^{\beta}}
\left\{ 1+\frac{(\alpha-\beta)(1+\alpha)}{2n} \right\} \right]
\\
-\frac{\left|m_0\right|\,\Gamma(2-\beta)}{n^{1-\alpha}}
-\frac{\left|m_{\infty}\right|\,\Gamma(3+\alpha,n)}{n^{2+\beta}},
\end{multline}
and
\begin{equation}
\label{eq91}
Q_{n}(\alpha,\beta,\phi)
= \frac{\pi\,(\alpha+\beta)
\sin(\pi\alpha)\,\Gamma(1+\alpha)\,(\pi-\phi)^{2}}
{(1-\alpha)\Gamma(\beta)\,n^{1+\alpha-\beta}}
\ge 0.
\end{equation}

\end{theorem}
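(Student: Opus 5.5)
The plan is to chain the earlier results and check that the normalising factors combine correctly.

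\textbf{Step 1 (starting inequality).} By \cref{lem:h-lower-bound},
\[
w_n(\alpha,\beta,-1)-|w_n(\alpha,\beta,e^{i\phi})|\ge \Delta^2 h(\alpha,\beta,\phi;n).
\]

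\textbf{Step 2 (rescaling).} By the definition \cref{eq58},
\[
h=\frac{(\alpha+\beta)\sin(\pi\alpha)\sin(\pi\beta)\Gamma(1-\beta)\Gamma(1+\alpha)}{(1-\alpha)n^{1+\alpha-\beta}}\,\mathcal{H}.
\]
Euler's reflection formula \cref{eq08} gives $\sin(\pi\beta)\Gamma(1-\beta)=\pi/\Gamma(\beta)$. Hence $\Delta^2 h = Q_n\,\mathcal{H}$ with $Q_n$ exactly as in \cref{eq91}. The factor $Q_n$ is nonnegative because every factor is nonnegative on the stated ranges: $\sin(\pi\alpha)\ge0$, $1-\alpha>0$ for $\alpha<1$, and $\Gamma(\beta)>0$.

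\textbf{Step 3 (lower bound for $\mathcal{H}$).} Insert the decomposition \cref{eq57}. Its main term $\mathcal{H}^{(2)}$ is precisely the bracketed part of \cref{eq90}. The error terms are bounded below by \cref{lem:error-bounds}. Since $m_0,m_\infty<0$, these bounds read
\[
\mathcal{E}_0\ge-|m_0|\Gamma(2-\beta)n^{-1+\alpha},\qquad
\mathcal{E}_\infty\ge-|m_\infty|\Gamma(3+\alpha,n)n^{-2-\beta}.
\]
Together these give $\mathcal{H}\ge P_n$. Because $Q_n\ge0$, multiplying through yields \cref{eq89}.

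\textbf{Step 4 (boundary and limiting cases).} This is the main obstacle. \Cref{lem:I1I2-expansion} was proved for open ranges $\alpha,\beta\in(0,1)$ and $\phi<\pi$, while the theorem allows $\alpha=1$, $\beta=1$ and $\phi=\pi$.

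\emph{Case $\alpha=1$.} Here $Q_n$ is interpreted through its finite limit. The factor $(1-\alpha)$ in the denominator of $Q_n$ is cancelled by the $(1-\alpha)$ inside $A_1$ and $A_2$, so the product $Q_nP_n$ stays finite.

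\emph{Case $\phi=\pi$.} Both sides are continuous in $\phi$, and $\Delta^2$ cancels against the $\Delta^{-2}$ built into $C_1$, $C_2$, $A_1$, $A_2$.

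\emph{General argument.} I would argue that both sides of \cref{eq89} are continuous on the closed parameter set, with the removable singularities given by the limiting formulas \cref{eq63,eq64,eq65,eq66,eq67,eq68}. The inequality, proved on the interior, then passes to the boundary by taking limits.

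\emph{What must be checked.} The key point is that the error bounds \cref{eq81,eq82} hold uniformly up to the boundary. This holds because $m_0$ and $m_\infty$ were defined as infima over the open set, and $L$ extends continuously to the closure. The decomposition \cref{eq57} itself is asserted for the closed ranges with limits understood, so no further argument is needed there.
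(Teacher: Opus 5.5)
Your Steps 1--3 are the paper's argument: the paper simply chains \cref{eq30}, the rescaling \cref{eq58} with the reflection formula \cref{eq08} (which gives $\Delta^2 h = Q_n\mathcal{H}$), the decomposition \cref{eq57,eq59}, and the error bounds \cref{eq81,eq82}, with boundary values read as limits, so your proposal is correct. One correction to your Step 4: at $\alpha=1$ the factor $Q_n$ is itself finite because $\sin(\pi\alpha)/(1-\alpha)\to\pi$, not because its $(1-\alpha)$ cancels against the factors in $A_1$ and $A_2$ (that could not control the $|m_0|$ and $|m_\infty|$ terms of $P_n$), although your general continuity argument already covers this case.
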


Note that at the boundaries
\begin{multline}
\label{eq92}
P_n(0,1,\phi)
=\frac{1}{\pi(\pi-\phi)^2}
\left[\ln\left\{\csc\left(\tfrac12 \phi\right)\right\}
-\frac{\csc\left(\tfrac12 \phi\right)-1}
{2 n}
\left(1-\frac{1}{2n}\right)\right]
\\
-\frac{|m_0|}{n}
-\frac{\left|m_{\infty}\right|}{n\,e^{n}}\left(1+\frac{2}{n}
+\frac{2}{n^{2}}\right),
\end{multline}
\begin{equation}
\label{eq93}
P_{n}(1,0,\phi)
= n\left(2+\frac{1}{n}\right)
\frac{1-\sin(\tfrac12 \phi)}
{\pi\,(\pi-\phi)^2}
-\left|m_{0}\right|
-\frac{n\left|m_{\infty}\right|}{e^{n}}\left(1+\frac{3}{n}
+\frac{6}{n^{2}}+\frac{6}{n^{3}}\right),
\end{equation}
\begin{equation}
\label{eq94}
P_{n}(1,1,\phi)= \frac{n
\left\{1-\sin(\tfrac12 \phi)\right\}}{\pi\,(\pi-\phi)^2}
-\left|m_{0}\right|
-\frac{\left|m_{\infty}\right|}{e^{n}}\left(1+\frac{3}{n}
+\frac{6}{n^{2}}+\frac{6}{n^{3}}\right),
\end{equation}
and 
\begin{multline}
\label{eq95}
P_{n}(0,0,\phi)
=\frac{\ln\left\{\csc(\tfrac12\phi)\right\}}
{\pi(\pi-\phi)^2}
\left[\ln(n)+\gamma-\frac{1}{2}
\ln\left\{\frac{\csc(\tfrac12\phi)}{4}\right\}\right]
\\
-\frac{|m_0|}{n}
-\frac{\left|m_{\infty}\right|}{e^{n}}\left(1+\frac{2}{n}
+\frac{2}{n^{2}}\right).
\end{multline}
All the above can easily be shown to be positive for $n \ge 5$ and $\phi \in [\phi_{0},\pi)$, with finite positive limits as $\phi \to \pi$. 

In addition, $P_{n}(\alpha,\beta,\pi)$ is given by \cref{eq90} with $A_1(\alpha,\pi)$ and $A_2(\alpha,\beta,\pi)$ being given by \cref{eq65,eq68}, and incidentally from this (for $n = 5$) we find numerically that
\begin{equation}
\label{eq96}
\inf_{\alpha,\beta \in(0,1)} P_5 (\alpha,\beta,\pi)
=P_5(0,1,\pi)=\frac{9}{80\pi}-\frac{|m_0|}{5}-\frac{37\,|m_\infty|}{125\,e^{5}}
=0.01621575275 \cdots.
\end{equation}

\begin{figure}[H]
 \centering
 \includegraphics[
 width=0.9\textwidth,keepaspectratio]{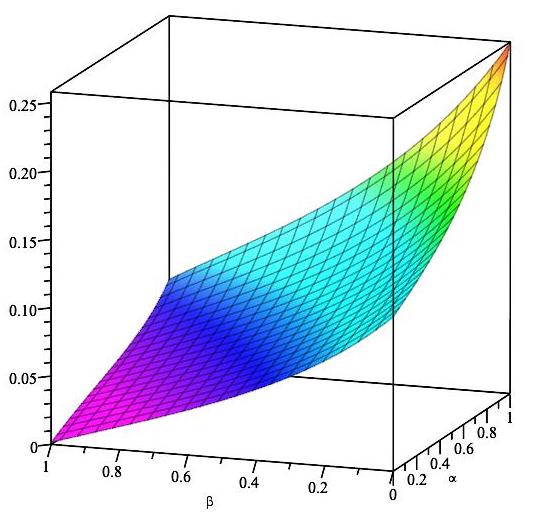}
 \caption{Graph of $P_5(\alpha,\beta,\phi_0)$ for $\alpha,\beta \in [0,1]$.}
 \label{fig:P5}
\end{figure}

The desired positivity of the infimum is given by the following, which then on referring to \cref{thm:Pn}, establishes \cref{thm:main} for $n=5$.

\newpage

\begin{proposition}
\label{prop:Pn-Bound}
For $\phi_0=0.061$
\begin{multline}
\label{eq97}
\inf_{\substack{\alpha,\beta\in(0,1)
\\ \phi \in [\phi_0,\pi)}} 
P_5(\alpha,\beta,\phi)
=P_5(0,1,\phi_0)
\\
=\frac{1}{\pi(\pi-\phi_{0})^2}
\left[\ln\left\{\csc\left(\tfrac12 \phi_{0}\right)\right\}
-\frac{9\left\{ \csc\left(\tfrac12 \phi_{0}\right)-1 \right\}}
{100}\right]
\\
-\frac{|m_0|}{5}
-\frac{37\left|m_{\infty}\right|}{125\,e^{5}}
= 0.001500310752 \cdots.
\end{multline}
\end{proposition}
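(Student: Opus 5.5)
The statement has two parts. One is an explicit closed form for the value at the corner $(\alpha,\beta,\phi)=(0,1,\phi_0)$. The other is the claim that this corner gives the infimum of $P_5$ over the whole box. I would prove the closed form analytically and establish the infimum claim by a certified numerical minimisation, in the same way as \cref{prop:m0minf}.

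\emph{Corner value.} Put $n=5$ in \cref{eq92}. The bracket becomes $\ln\{\csc(\tfrac12\phi)\}-\tfrac{1}{10}(1-\tfrac1{10})\{\csc(\tfrac12\phi)-1\}$, and $\tfrac1{10}\cdot\tfrac9{10}=\tfrac{9}{100}$. The incomplete gamma term is $\Gamma(3,5)=2e^{-5}(1+5+\tfrac{25}{2})=37e^{-5}$, multiplied by $n^{-3}=1/125$. This gives the displayed closed form at $\phi=\phi_0$. Inserting \cref{eq77,eq78} and $\phi_0=0.061$ then produces the stated decimal value.

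\emph{Reduction of the infimum to a compact search.} $P_5$ is built from $A_1$, $A_2$, elementary powers of $n=5$, $\Gamma(2-\beta)$ and $\Gamma(3+\alpha,5)$. It extends continuously to the compact box $[0,1]^2\times[\phi_0,\pi]$ via the limits \cref{eq63,eq64,eq65,eq66,eq67,eq68}, so the infimum over the open set equals the minimum over the closed box. The four edges and corners are handled in closed form by \cref{eq92,eq93,eq94,eq95}:
\begin{itemize}
\item $P_5(1,0,\phi)$ and $P_5(1,1,\phi)$ are dominated by $n\{1-\sin(\tfrac12\phi)\}/(\pi\Delta^2)\ge 5/(8\pi)$-type quantities and are obviously larger.
\item $P_5(0,0,\phi)$ carries the extra factor $\ln 5+\gamma+\dots$ multiplying $\ln\csc$, and so exceeds $P_5(0,1,\phi)$.
\item On the edge $\beta=1,\alpha=0$, I would show that $\phi\mapsto P_5(0,1,\phi)$ is increasing on $[\phi_0,\pi]$, which places its minimum at $\phi_0$. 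To do this, write $u=\csc(\tfrac12\phi)$; the bracket $\ln u-\tfrac{9}{100}(u-1)$ is increasing in $u$ for $u<100/9$, and one checks the combination with $(\pi-\phi)^{-2}$ monotonically, or by sampling together with a derivative bound. The endpoint $\phi=\pi$ gives the value \cref{eq96}, which is larger.
\end{itemize}

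\emph{Interior search.} For the interior and the remaining faces I would repeat the Maple grid search described for $m_0$: a 60-digit grid in $(\alpha,\beta,\phi)$, denser near $\alpha,\beta\in\{0,1\}$ and near $\phi_0$, followed by local refinement with \texttt{Optimization[Minimize]} from many starts. The aim is to confirm that no value falls below $P_5(0,1,\phi_0)$. The removable singularities at $\alpha\to0,1$, $\beta\to0,1$ and $\phi\to\pi$ are evaluated through the limiting formulas. Since $P_5$ is slowly varying, I would add a crude Lipschitz bound on its partial derivatives over each cell to make the grid conclusion rigorous.

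\emph{Main obstacle.} The main obstacle is the neighbourhood of the minimising corner, where the margin is only about $1.5\times10^{-3}$. Near $\alpha=0$ the factor $1/(\alpha+\beta)$ and the ratio $A_1/\sin(\pi\alpha)$ have only removable singularities, so derivative bounds there degrade. I would first handle this region by expanding $P_5$ to first order in $\alpha$ and in $1-\beta$ about $(0,1)$. The target is to show $\partial_\alpha P_5>0$ and $\partial_\beta P_5<0$ uniformly for $\phi$ near $\phi_0$, which would make the corner a genuine local minimum. Outside that small neighbourhood, the Lipschitz-certified grid applies.
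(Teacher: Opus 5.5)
Your overall route matches the paper's. The closed form is just \cref{eq92} at $n=5$, and your reductions $\tfrac1{10}\cdot\tfrac9{10}=\tfrac9{100}$ and $\Gamma(3,5)\,5^{-3}=37e^{-5}/125$ are correct. The infimum is then obtained numerically using the exact boundary limits and cross-checked with \texttt{Minimize}. The paper grids only $(\alpha,\beta)$, and for each pair minimises in $\phi$ exactly by evaluating $\phi_0$, $\pi$ and the roots of $\partial P_5/\partial\phi$. Your Lipschitz-certified grid plus a first-order analysis at the corner would be more rigorous than that. The signs you target there are right: at $(0,1,\phi_0)$ one finds $\partial_\alpha P_5\approx0.059$ and $\partial_\beta P_5\approx-0.079$.

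The step that fails is your analytic treatment of the critical edge: $\phi\mapsto P_5(0,1,\phi)$ is \emph{not} increasing on $[\phi_0,\pi]$, so the monotonicity you set out to prove is false.
\begin{itemize}
\item Its $\phi$-dependent part $\{\ln u-\tfrac{9}{100}(u-1)\}/\{\pi(\pi-\phi)^2\}$, with $u=\csc(\tfrac12\phi)$, is about $0.021$ at $\phi_0$.
\item It rises to about $0.055$ near $\phi=0.25$, then decreases, and is about $0.040$ at $\phi=\tfrac12\pi$.
\item Near $\phi=\pi$, with $t=\tfrac12(\pi-\phi)$, it equals $\frac{1}{8\pi}\{\frac{91}{100}+\frac{31}{240}t^2+\mathcal{O}(t^4)\}$, which decreases as $\phi\uparrow\pi$.
\end{itemize}
Your own observation actually points the wrong way. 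Since $u$ decreases as $\phi$ increases, the bracket being increasing in $u$ for $u<100/9$ means it is \emph{decreasing} in $\phi$ for $\phi>2\arcsin(0.09)\approx0.18$. On that range the growth of $(\pi-\phi)^{-2}$ does not compensate.

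What is true is this: the interior critical point on this edge is a maximum, and $P_5(0,1,\pi)>P_5(0,1,\phi_0)$, so the minimum is still at $\phi_0$. The paper's endpoint-plus-stationary-point evaluation picks this up automatically. You should replace the monotonicity claim by this endpoint comparison, or by your sampling-plus-derivative-bound fallback.

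Your first bullet has a similar reversed inequality. Write $\{1-\sin(\tfrac12\phi)\}/(\pi-\phi)^2=\tfrac18\{\sin(\tfrac12t)/(\tfrac12t)\}^2$. This is bounded \emph{above} by $\tfrac18$, not below. Its minimum on $[\phi_0,\pi]$ is at $\phi_0$, where it is about $0.102$. That gives $P_5(1,1,\phi)\ge P_5(1,1,\phi_0)\approx0.065$, so the conclusion for that edge still holds.
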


\begin{figure}
 \centering
 \includegraphics[
 width=0.9\textwidth,keepaspectratio]{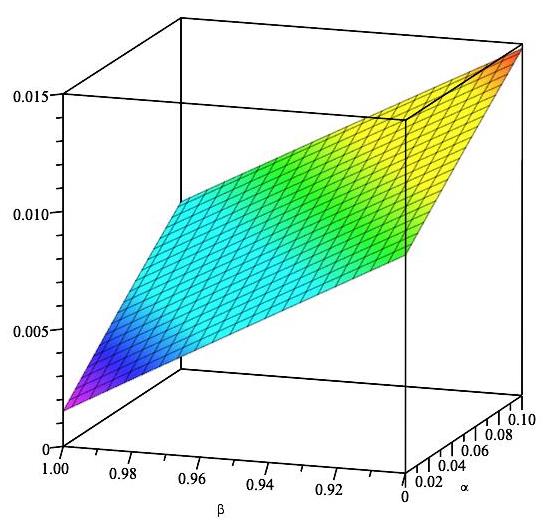}
 \caption{Graph of $P_5(\alpha,\beta,\phi_0)$ for $\alpha \in [0,0.1]$ and $\beta \in [0.9,1]$.}
 \label{fig:P5-Zoom}
\end{figure}

\begin{remark}
See \cref{fig:P5,fig:P5-Zoom} for surface plots of $P_5(\alpha,\beta,\phi_0)$ (the latter depicting the surface close to the minimum). That the (positive) infimum \cref{eq97} is relatively small is a consequence of our choice of $\phi_0=0.061$ being as small as possible. We chose the smallest such value (to three digits) that would still result in \cref{eq97} being positive. We see from \cref{eq56} that $-A_2(\alpha,\beta,\phi) \to -\infty$ as $\phi \to 0^+$. In addition, the infima $m_0$ and $m_\infty$, as given by \cref{eq62,eq73,eq74,eq75,eq76}, also depend on $\phi_0$, and they too approach $-\infty$ as $\phi_0 \to 0^+$. Hence from \cref{eq90} $P_5(\alpha,\beta,\phi) \to -\infty$ in this limit (noting from \cref{eq55} that $A_1(\alpha,\beta,\phi)$ remains bounded).
\end{remark}

To numerically obtain the infimum of $P_5(\alpha,\beta,\phi)$ given in \cref{eq90} we carried out a 2-D $\alpha,\beta$ grid search in Maple (using high-precision arithmetic), storing at each step the smallest current value of $P_5(\alpha,\beta,\phi)$. We fixed $\phi_{0}=0.061$ and $n=5$, and used the numerical values of $m_0$ and $m_{\infty}$ from \cref{eq77,eq78}. The search sampled $\alpha$ and $\beta$ over $60^2$ point pairs on a grid biased toward $0$ and $1$ (including boundary values). 

For each grid pair $(\alpha,\beta)$ we minimized over $\phi\in[\phi_0,\pi]$ by evaluating $P_5(\alpha,\beta,\phi)$ only at the endpoints $\phi=\phi_0$ and $\phi=\pi$, together with any interior stationary points in $(\phi_0,\pi)$ where $\partial P_5(\alpha,\beta,\phi)/ \partial \phi=0$. These stationary points were computed from exact closed-form formulas for the derivatives, using \texttt{fsolve} with sign-change bracketing. Since the cases $\alpha,\beta\in\{0,1\}$ and $\phi=\pi$ are removable singularities, near-edge values were snapped to $\alpha,\beta=0$ or $1$, and $\phi=\pi$, with both $P_5(\alpha,\beta,\phi)$ and $\partial P_5(\alpha,\beta,\phi) /\partial \phi$ being evaluated via the corresponding exact limiting formulas on edges and corners (see \cref{eq65,eq68,eq92,eq93,eq94,eq95,eq96}). Over the resulting candidate set in $\phi$, the smallest value recorded on the $(\alpha,\beta)$ grid yielded \cref{eq97}, attained as stated at $(\alpha,\beta,\phi)=(0,1,\phi_0)$. The exact value shown comes from \cref{eq93}.

Again, as an independent check, we used the Maple routine \texttt{Minimize} over the box $\alpha, \beta \in [0,1]$, $\phi \in [\phi_0,\pi]$, with Digits set very high, to verify \cref{eq97}. Finally, in \cite{Dunster:2026:BWM} surface plots of $P_5(\alpha,\beta,\phi)$ for $\alpha, \beta \in [0,1]$, as well as for $(\alpha, \beta)$ near $(0,1)$, animated over $\phi_0 \le \phi \le \pi$, provide further visual confirmation. Maple source code for the computation of \cref{eq97} is also given.

\section{Proof of \texorpdfstring{\cref{thm:main}}{} for \texorpdfstring{$n=7,9,11,13,\ldots$}{}}
\label{sec:n>5}

Define
\begin{equation}
\label{eq98}
\nu = \alpha+\beta \in(0,2),
\end{equation}
\begin{equation}
\label{eq99}
c_1(\alpha,\beta)
= \tfrac12 (\alpha-\beta)(1-\beta),
\end{equation}
\begin{equation}
\label{eq100}
c_2(\alpha,\beta)
= \tfrac12 (\alpha-\beta)(1+\alpha),
\end{equation}
and
\begin{equation}
\label{eq101}
J(\alpha,\beta,\phi;n)
= n^{1-\alpha}P_n(\alpha,\beta,\phi),
\end{equation}
so that, using \cref{eq90},
\begin{multline}
\label{eq102}
J(\alpha,\beta,\phi;n)
= \nu^{-1}A_1(\alpha,\phi)
\left\{n+c_1(\alpha,\beta) \right\}
\\
- \nu^{-1}n^{-\nu}A_2(\alpha,\beta,\phi)
\left\{n+c_2(\alpha,\beta)\right\}
- |m_0|\,\Gamma(2-\beta)
- |m_\infty|\,g_{\alpha,\beta}(n),
\end{multline}
where
\begin{equation}
\label{eq103}
g_{\alpha,\beta}(n)
= \Gamma(3+\alpha,n)\,n^{-1-\nu}.
\end{equation}

The purpose of this section is to prove the following, which in conjunction with \cref{eq89,eq97,eq101}, proves \cref{thm:main} for $n=7,9,11,\ldots$.

\begin{theorem}
\label{thm:Jn}
For $n \ge 5$ (not necessarily integral), $\alpha,\beta \in (0,1]$ and $\phi \in [\phi_{0},\pi)$
\begin{equation}
\label{eq104}
J(\alpha,\beta,\phi;n) \ge J(\alpha,\beta,\phi;5).
\end{equation}
\end{theorem}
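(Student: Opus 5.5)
The plan is to show that $J(\alpha,\beta,\phi;n)$ is nondecreasing in the continuous variable $n\in[5,\infty)$ for each fixed $(\alpha,\beta,\phi)$. Then \cref{eq104} follows by integrating $\partial J/\partial n$ from $5$ to $n$. From \cref{eq102},
\begin{equation*}
\frac{\partial J}{\partial n}
=\frac{A_1(\alpha,\phi)}{\nu}
-\frac{A_2(\alpha,\beta,\phi)}{\nu}\,\frac{\partial}{\partial n}\left[n^{-\nu}\{n+c_2(\alpha,\beta)\}\right]
-|m_\infty|\,g_{\alpha,\beta}'(n).
\end{equation*}
The constant term $|m_0|\Gamma(2-\beta)$ drops out.

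First, the $A_2$ term is handled directly. We have
\begin{equation*}
\frac{\partial}{\partial n}\left[n^{-\nu}(n+c_2)\right]=n^{-\nu-1}\{(1-\nu)n-\nu c_2\}.
\end{equation*}
Since $A_2\ge0$, the contribution of this term to $\partial J/\partial n$ is $-\nu^{-1}A_2 n^{-\nu-1}\{(1-\nu)n-\nu c_2\}$.
\begin{itemize}
\item When $\nu\ge1$ and $(1-\nu)n-\nu c_2\le 0$ for all $n\ge5$, the contribution is nonnegative. For example, if $\alpha\ge\beta$ then $c_2\ge0$.
\item Otherwise, I bound $n^{-\nu-1}|(1-\nu)n-\nu c_2|$ above by its supremum over $n\ge5$. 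This supremum is attained either at $n=5$ or at the explicit critical point $n^*=\nu(1+\nu)c_2/\{\nu(1-\nu)\}$, when that point lies in $[5,\infty)$.
\end{itemize}
For the incomplete gamma term, differentiating \cref{eq103} gives
\begin{equation*}
g'(n)=-n^{2+\alpha}e^{-n}n^{-1-\nu}-(1+\nu)\Gamma(3+\alpha,n)n^{-2-\nu}<0.
\end{equation*}
So $-|m_\infty|g'(n)>0$ and this term only helps; it can simply be discarded.

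It then suffices to prove
\begin{equation*}
D(\alpha,\beta,\phi):=A_1(\alpha,\phi)-A_2(\alpha,\beta,\phi)\sup_{n\ge5}\left[n^{-\nu-1}\{(1-\nu)n-\nu c_2\}\right]_+\ \ge0 .
\end{equation*}
This is a function of $(\alpha,\beta,\phi)$ alone, assembled from the elementary expressions \cref{eq55,eq56}. In the generic case it reduces to checking
\begin{equation*}
A_1(\alpha,\phi)\,5^{\nu}\ \ge\ A_2(\alpha,\beta,\phi)\,\{(1-\nu)-\nu c_2/5\},
\end{equation*}
possibly with the $n^*$ variant. In keeping with the paper's methodology, I would then:
\begin{enumerate}
\item compute $\inf D$, or a suitably normalised version of $D$, over the compact box $\alpha,\beta\in[0,1]$, $\phi\in[\phi_0,\pi]$ by the same grid-plus-critical-point search in high precision;
\item treat the faces $\alpha,\beta\in\{0,1\}$ and $\phi=\pi$ via the limits \cref{eq63,eq64,eq65,eq66,eq67,eq68};
\item verify positivity, perhaps after splitting into the cases $\nu<1$ and $\nu\ge1$.
\end{enumerate}

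The main obstacle is the corner $\alpha\to0$, $\beta\to0$ (so $\nu\to0$) near $\phi=\phi_0$. There $A_1$ and $A_2$ both tend to $\ln\csc(\tfrac12\phi)/\{\pi(\pi-\phi)^2\}$ and nearly cancel. Both terms of $\partial J/\partial n$ also carry a factor $\nu^{-1}$, so positivity hinges on second-order behaviour.

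To handle this I would multiply through by $\nu$ and expand $A_1-A_2 n^{-\nu}(1-\nu)$ to first order in $\alpha$ and $\beta$. The difference should behave like $\nu\ln n$ times a positive quantity, consistent with the $\ln(n)$ term in \cref{eq95}. This would show $\partial J/\partial n\to A_1(0,\phi)\{\ln n+O(1)\}/n>0$, with care taken that $\ln 5$ dominates the $O(1)$ terms, such as $-\tfrac12\ln\{\csc(\tfrac12\phi)/4\}$. Near this corner I would work with the exact limit of $\partial J/\partial n$ rather than the crude supremum bound, and verify its positivity numerically in one variable $\phi$. Elsewhere the crude bound should have ample margin.
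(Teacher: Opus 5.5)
Your plan is essentially the paper's proof: it also shows $\partial J/\partial n\ge0$ from \cref{eq105}, discards the $-|m_\infty|\,g_{\alpha,\beta}'(n)$ term via \cref{eq106}, and bounds the $A_2$ contribution uniformly for $n\ge5$, reducing to $n$-free functions $F_1$, $F_2$ of \cref{eq108,eq113} that are minimised numerically. These are your $D/\nu$ with the supremum replaced by the cruder bounds $\max\{1-\nu,0\}\,5^{-\nu}$ for $c_2\ge0$ and $5^{-\nu}\{|1-\nu|+\nu|c_2|/5\}$ for $c_2<0$, so your sharper bound via $n^*$ is valid but unnecessary.

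The corner $\nu\to0$ is only a removable singularity, not the bottleneck. There $\nu^{-1}\{A_1-(1-\nu)n^{-\nu}A_2\}\to A_1(0,\phi)\{\ln n+1+\gamma-\tfrac12\ln(\tfrac14\csc\tfrac12\phi)\}$, with no extra factor $1/n$ as your heuristic suggested. At $n=5$ this is \cref{eq117}, roughly $0.25$ at $\phi_0$. The infima of about $0.032$ in \cref{prop:F1F2} occur instead at $\beta=1$, $\phi=\phi_0$.
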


From \cref{prop:Pn-Bound} and \cref{eq101} it suffices to show that $\partial J/ \partial n  \ge 0$ for all $n\ge5$, where $n$ is temporarily regarded as a continuous real variable rather than an integer. To this end, on differentiating \cref{eq102} with respect to $n$, we find
\begin{multline}
\label{eq105}
\partial J(\alpha,\beta,\phi;n) /\partial n
= \nu^{-1}A_1(\alpha,\phi)
- \nu^{-1}(1-\nu)A_2(\alpha,\beta,\phi)\,n^{-\nu} \\
+ A_2(\alpha,\beta,\phi)\,c_2(\alpha,\beta)\,n^{-1-\nu}
- |m_\infty|\,g_{\alpha,\beta}'(n).
\end{multline}
Thus $\partial J/ \partial n  \sim \nu^{-1}A_1>0$ for large $n$ and $\nu>0$, which indicates, at least asymptotically, the desired positivity, and hence that the desired bound \cref{eq104} holds. We now proceed to confirm this.

To do so, we note from \cref{eq100} that $c_2(\alpha,\beta)$, which appears in the third term on the RHS of \cref{eq105}, can take either sign, and so we split the analysis into the two cases $c_2(\alpha,\beta)\ge0$ and $c_2(\alpha,\beta)<0$.

\subsection{Case 1: \texorpdfstring{$c_2(\alpha,\beta)\ge 0 \; (\alpha\ge\beta)$}{}}

Firstly, from \cref{eq103} and using $\partial \Gamma(a,x)/\partial x = -x^{-1+a}e^{-x}$ yields
\begin{equation}
\label{eq106}
g_{\alpha,\beta}'(n)
= -(1+\nu)\,n^{-2-\nu}\,\Gamma(3+\alpha,n)
- n^{1-\beta}e^{-n} < 0,
\end{equation}
for $n>0$ and $\alpha,\beta \in (0,1]$.

Now for the present case, the $c_2(\alpha,\beta)$ term in \cref{eq105} is nonnegative, and by \cref{eq106} the last term is also nonnegative. Consequently, noticing that $n^{-\nu}\le5^{-\nu}$ for $n\ge5$, we have
\begin{equation}
\label{eq107}
\partial J(\alpha,\beta,\phi;n) /\partial n 
\ge  F_1(\alpha,\beta,\phi)
\quad  (n \ge 5, \, 0 \le \beta \le \alpha \le 1),
\end{equation}
where
\begin{equation}
\label{eq108}
F_1(\alpha,\beta,\phi)
= \frac{1}{\nu}\left[
A_1(\alpha,\phi)
-\frac{\max\{1-\nu,0\}}{5^{\nu}}A_2(\alpha,\beta,\phi)
\right].
\end{equation}
Since $A_1(\alpha,\phi)$ and $A_2(\alpha,\beta,\phi)$ are smooth in $(\alpha,\beta,\phi)$ and from \cref{eq63,eq66}
\begin{equation*}
A_1(\alpha,\phi)-A_2(\alpha,\beta,\phi)(1-\nu)
5^{-\nu}=\mathcal{O}(\nu)
\quad (\nu \to 0),
\end{equation*}
the quantity $F_1(\alpha,\beta,\phi)$ is
continuous and bounded on the parameter regime under consideration. Incidentally, since $A_1(\alpha,\phi) > 0$ it is certainly clear that $F_1(\alpha,\beta,\phi)>0$ for $\nu=\alpha+\beta \ge 1$.

\subsection{Case 2: \texorpdfstring{$c_2(\alpha,\beta)<0 \; (\alpha<\beta)$}{}}

For $n\ge5$ we have
\begin{equation*}
-A_2(\alpha,\beta,\phi)(1-\nu)\,n^{-\nu}
\ge -A_2(\alpha,\beta,\phi) |1-\nu|\,5^{-\nu},
\end{equation*}
and
\begin{equation*}
\nu A_2(\alpha,\beta,\phi) c_2(\alpha,\beta)\,n^{-1-\nu}
\ge -\nu A_2(\alpha,\beta,\phi)
|c_2(\alpha,\beta)|\,5^{-1-\nu}.
\end{equation*}
Therefore, in conjunction with \cref{eq105,eq106}, it is evident that
\begin{equation}
\label{eq112}
\partial J(\alpha,\beta,\phi;n) /\partial n
\ge F_2(\alpha,\beta,\phi)
\quad  (n \ge 5, \, 0 \le \alpha < \beta \le 1),
\end{equation}
where
\begin{equation}
\label{eq113}
F_2(\alpha,\beta,\phi)
= \frac{1}{\nu}\left[
A_1(\alpha,\phi)
- \frac{1}{5^{\nu}}\left\{|1-\nu|
+ \frac{\nu |c_2(\alpha,\beta)|}{5}
\right\}A_2(\alpha,\beta,\phi)
\right].
\end{equation}

Thus, in summary, to establish $\partial J /\partial n \ge 0$ for $n \ge 5$ it is sufficient from \cref{eq107,eq112} to check that $F_1(\alpha,\beta,\phi)\ge 0$ for $0 \le \beta \le \alpha \le 1$, and $F_2(\alpha,\beta,\phi)\ge 0$ for $0 \le \alpha<\beta \le 1$. The following then completes the proof of \cref{thm:Jn}.

\begin{proposition}
\label{prop:F1F2}
For $\phi_0=0.061$ we have
\begin{equation}
\label{eq115}
\inf_{\substack{\alpha,\beta\in(0,1)
,\, \alpha \ge \beta
\\ \phi \in [\phi_0,\pi)}} 
F_1(\alpha,\beta,\phi)
=0.03251857515\cdots,
\end{equation}
attained at $(\alpha,\beta,\phi)=(1,1,\phi_0)$, and
\begin{equation}
\label{eq116}
\inf_{\substack{\alpha,\beta\in(0,1)
,\, \alpha < \beta
\\ \phi \in [\phi_0,\pi)}} 
F_2(\alpha,\beta,\phi)
=0.03204047407\cdots,
\end{equation}
attained at $(\alpha,\beta,\phi)= (\alpha_2,1,\phi_0)$, where $\alpha_2=0.8583872779\cdots$. In particular, both infima are strictly positive.
\end{proposition}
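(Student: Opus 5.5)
The plan follows the paper's treatment of \cref{prop:Pn-Bound}: reduce each infimum to a minimization of an explicit, slowly varying function over a compact box, then certify the minimizer. First I will extend $F_1$ and $F_2$ continuously to the closed regions $0\le\beta\le\alpha\le1$ and $0\le\alpha\le\beta\le1$, $\phi\in[\phi_0,\pi]$. The removable singularities are handled as follows.
\begin{itemize}
\item At $\nu\to0$, use \cref{eq63,eq66} to write $A_1-(1-\nu)5^{-\nu}A_2=\mathcal{O}(\nu)$ and extract the finite limit of $\nu^{-1}[\cdots]$ by one Taylor step in $(\alpha,\beta)$.
\item At $\alpha\to1$, use \cref{eq64}; note that $A_2$ carries the factor $1-\alpha$.
\item At $\beta\in\{0,1\}$, use \cref{eq66,eq67}.
\item At $\phi\to\pi$, use \cref{eq65,eq68}.
\end{itemize}
On the compact closures the functions are then continuous, so the infima are attained. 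It also suffices to check that the boundary values agree with the stated minimizers.

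Next I will localize the minimizers analytically before doing any numerics. For $\nu\ge1$, $F_1\ge\nu^{-1}A_1>0$ trivially. Moreover $A_2$ is decreasing in $\phi$ and blows up as $\phi\to0^+$, while $A_1$ stays bounded. This makes $\phi=\phi_0$ the natural candidate for the worst case, and I will first check that $\partial F_j/\partial\phi>0$ on most of the box.

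At $(1,1,\phi)$ the factor $1-\alpha$ kills $A_2$, so $F_1(1,1,\phi)=\tfrac12A_1(1,\phi)=\{1-\sin(\tfrac12\phi)\}/\{\pi(\pi-\phi)^2\}$. I will check directly that this is decreasing in $\phi$ near $\phi_0$, which yields the value $0.0325\cdots$ at $\phi_0$.

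For $F_2$ with $\beta=1$, the function reduces to an explicit one-variable function of $\alpha$:
\[
\frac{1}{1+\alpha}\Bigl[A_1(\alpha,\phi_0)-5^{-1-\alpha}\bigl\{\alpha+\tfrac{(1+\alpha)(1-\alpha^2)}{10}\bigr\}\tfrac{(1-\alpha)\{\csc(\frac12\phi_0)-1\}}{2\pi(\pi-\phi_0)^2}\Bigr].
\]
Its interior critical point $\alpha_2\approx0.858$ is found with \texttt{fsolve} applied to the exact derivative.

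The global certification mirrors the procedure used for \cref{eq97}. For each of $F_1$ and $F_2$ I will run a 2-D $(\alpha,\beta)$ grid, biased towards $0$ and $1$ and including the boundary values evaluated with the limit formulas, in high precision. On each grid pair I minimize over $\phi$ using the endpoints together with the roots of the exact $\partial F_j/\partial\phi$, located by bracketed \texttt{fsolve}. I will then cross-check with \texttt{Optimization[Minimize]} on the full boxes.

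The main obstacle is the corner $\nu\to0$ together with $\phi=\phi_0$. There $A_2$ is large, about $\ln\csc(\tfrac12\phi_0)/\{\pi(\pi-\phi)^2\}$, and $F_j$ is a delicate difference divided by $\nu$. I will treat this corner analytically. I will compute the limit $F_j(\alpha,\beta,\phi)\to\partial_\nu\bigl[A_1-(1-\nu)5^{-\nu}A_2\bigr]$ along rays $\alpha=t\nu$. This limit is an explicit expression involving $\ln 5$ and the $\alpha$- and $\beta$-derivatives of $A_{1,2}$ at the origin. I will verify that it is comfortably positive for all $t\in[0,1]$ and $\phi\in[\phi_0,\pi]$, so that the numerical minimum cannot hide in that corner. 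The grid then only needs to resolve the smooth interior and the $\beta=1$ edge where the minima actually occur.
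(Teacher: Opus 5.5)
Your plan is essentially the paper's: a biased $(\alpha,\beta)$ grid with $\phi$ restricted to $\phi_0$, $\pi$ and the roots of $\partial F_j/\partial\phi$, cross-checked by \texttt{Optimization[Minimize]}, with edges and corners supplied by exact limits. Your $\beta=1$ formula is \cref{eq119}, your $F_1(1,1,\phi)$ is \cref{eq118}, and your ray limit at $\nu\to0$ works out to $\ln\{\csc(\tfrac12\phi)\}\,[1+\gamma+\ln 10-\tfrac12\ln\{\csc(\tfrac12\phi)\}]/\{\pi(\pi-\phi)^2\}$ independently of the ray, which is \cref{eq117}.

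One slip: $F_1(1,1,\phi)=(8\pi)^{-1}(\sin v/v)^2$ with $v=\tfrac14(\pi-\phi)$, so it is \emph{increasing} on all of $[\phi_0,\pi]$, rising to $1/(8\pi)$; that, not a decrease near $\phi_0$, is why $\phi_0$ is the minimizer on that edge.
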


The computation of these values was performed in a similar manner to the method used for \cref{prop:Pn-Bound}, that is, sampling over a fine $\alpha, \beta$ grid, and $\phi$ only at boundary points ($\phi_0$ and $\pi$) and at any interior critical points. The infima were then independently confirmed using the Maple \texttt{Optimization[Minimize]} package. The grid and optimization code can be found in \cite{Dunster:2026:BWM}.

For the boundary points we again utilised exact limits, which in this case are given by
\begin{equation}
\label{eq117}
F_1(0,0,\phi)=F_2(0,0,\phi)
=\frac{\ln\left\{\csc(\tfrac{1}{2}\phi)\right\}
\left[ 2 \{1+\gamma+\ln(10)\}
+\ln\left\{\sin(\tfrac{1}{2}\phi)\right\}\right]}
{2\pi\,\Delta^2},
\end{equation}
\begin{equation}
\label{eq118}
F_1(1,1,\phi)=\frac{1-\sin\left(\tfrac12 \phi\right)}
{\pi(\pi-\phi)^2},
\end{equation}
and
\begin{equation}
\label{eq119}
F_2(\alpha,1,\phi)
=\frac{1}{1+\alpha}\left[
A_1(\alpha,\phi)
-\frac{(1-\alpha)\,C_2(1,\phi)}
{5^{1+\alpha}\,\pi}
\left\{\alpha
+\frac{1}{10}
(1+\alpha)^2(1-\alpha)\right\} \right],
\end{equation}
where from \cref{eq25,eq33}
\begin{equation}
\label{eq120}
C_2(1,\phi)=\frac{\csc\left(\tfrac12 \phi\right)-1}
{2(\pi-\phi)^2}.
\end{equation}

In addition
\begin{equation}
\label{eq121}
F_1(\alpha,\beta,\pi)
=\frac{1}{\nu}\left[
A_1(\alpha,\pi)
-\frac{\max\{1-\nu,0\}A_2(\alpha,\beta,\pi)}{5^{\nu}}
\right],
\end{equation}
\begin{equation}
\label{eq122}
F_2(\alpha,\beta,\pi)
=
\frac{1}{\nu}\left[
A_1(\alpha,\pi)-\frac{A_2(\alpha,\beta,\pi)}{5^{\nu}}\left(
|1-\nu|
+\frac{\nu\left|c_2(\alpha,\beta)\right|}{5}
\right)\right],
\end{equation}
where $A_1(\alpha,\pi)$, $A_2(\alpha,\beta,\pi)$ and $c_2(\alpha,\beta)$ are given by \cref{eq65,eq68,eq100}.

Incidentally, in comparison to \cref{eq115,eq116}, from \cref{eq121,eq122} we find
\begin{equation*}
\inf_{\alpha \ge \beta} F_1(\alpha,\beta,\pi)
= F_1(1,1,\pi) = \frac{1}{8\pi}
>  \inf_{\alpha<\beta} F_2(\alpha,\beta,\pi)
=0.03287693288\cdots,
\end{equation*}
the latter being attained at $\alpha= 0.4852393602\cdots$ and $\beta=1$.

\begin{figure}
 \centering
 \includegraphics[
 width=0.9\textwidth,keepaspectratio]{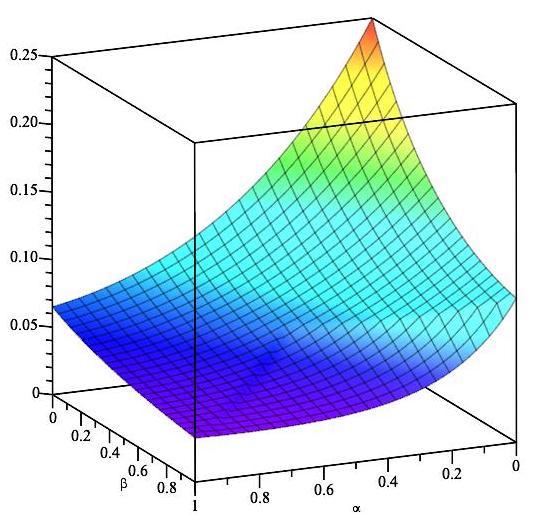}
 \caption{Graph of $F(\alpha,\beta,\phi_0)$ for $\alpha,\beta \in [0,1]$.}
 \label{fig:F1F2}
\end{figure}

In \cref{fig:F1F2} the surface of $F(\alpha,\beta,\phi_0)$ is shown, where
\begin{equation}
\label{eq124}
F(\alpha,\beta,\phi)
=\begin{cases}
F_1(\alpha,\beta,\phi)
& (\alpha\ge \beta)\\[1ex]
F_2(\alpha,\beta,\phi) & (\alpha<\beta)
\end{cases}\;,
\end{equation}
and this illustrates the infima of \cref{prop:F1F2}. For an animated version over $\phi \in [\phi_0,\pi]$ see \cite{Dunster:2026:BWM}.

\section{Extension to \texorpdfstring{$\phi \in [0,\phi_0)$}{} (\texorpdfstring{$\theta \in (\pi-\phi_0,\pi]$}{})}
\label{sec:Meijer}

The Watson large $n$ expansion breaks down near $\phi=0$, as can be seen from \cref{eq33} which becomes unbounded in this limit. In a subsequent paper we tackle $\phi \in [0,\phi_0)$ by replacing \cref{eq36,eq37} with uniform approximations for large $n$, valid near and at $\phi=0$, involving integrals of the form
\begin{equation}
\label{eq125}
M_{\lambda,\mu}(\zeta)
:=
\int_0^\infty t^\lambda (t^2+\zeta)^\mu e^{-t}\,dt,
\end{equation}
where $\zeta=n^2\rho^2$ and
\begin{equation}
\label{eq126}
\rho:=2\sin\left(\tfrac12\phi\right)
\sim \phi
\quad (\phi\to0).
\end{equation}
These integrals can be expressed in terms of Meijer $G$ functions; see \cite[Sec.~16.17]{NIST:DLMF}.

To see how integrals of the form \cref{eq125} apply, consider, for example, the troublesome second part of the integrand in \cref{eq28}, namely
\begin{equation}
\label{eq127}
\int_{0}^{\infty} (e^{s}-1)^{\alpha}
\left|e^{s}-e^{-i\phi}\right|^{-\beta}e^{-ns}\,ds,
\end{equation}
where $x=e^{i\phi}$. For small $s$ and $\phi$ one can show that the uniform first approximation for the integrand is given by
\begin{multline}
\label{eq128}
(e^{s}-1)^{\alpha}
\left|e^{s}-e^{-i\phi}\right|^{-\beta}
=(e^{s}-1)^{\alpha}
\left\{(e^s-1)^2+\rho^2e^s\right\}^{-\beta/2}
\\
\sim
s^\alpha(s^2+\rho^2)^{-\beta/2}
\quad (s\to0,\ \rho\to0).
\end{multline}
Substituting this into \cref{eq127} gives the formal approximation
\begin{equation}
\label{eq129}
\int_{0}^{\infty} (e^{s}-1)^{\alpha}
\left|e^{s}-e^{-i\phi}\right|^{-\beta}e^{-ns}\,ds
\sim
\int_0^\infty s^\alpha(s^2+\rho^2)^{-\beta/2}e^{-ns}\,ds.
\end{equation}
On making the change of variable $t=ns$, the right-hand side becomes
\begin{equation}
\label{eq130}
n^{-1-\alpha+\beta}
\int_0^\infty t^\alpha(t^2+n^2\rho^2)^{-\beta/2}e^{-t}\,dt
=
n^{-1-\alpha+\beta}M_{\alpha,-\beta/2}(\zeta),
\end{equation}
where $M_{\lambda,\mu}(\zeta)$ is given by \cref{eq125}, and remains bounded as $\zeta\to0^+$ ($\phi \to 0+$) whenever $\lambda+2\mu>-1$.

We remark that the argument $\zeta=n^2\rho^2\sim n^2\phi^2$  captures the transition $\phi\to0$, $n\to\infty$ with $n\phi$ bounded, which is precisely the uniform behaviour missing from the ordinary Watson expansion. For details on similar techniques, see \cite[Chap.~28]{Temme:2015:AMF}.

We finish by noting the following simple result, which confirms without using Meijer $G$ functions that \cref{eq05} holds ($n$ even or odd)  at $\theta=\pi$ ($\phi=0$), provided $\beta \ge \alpha$.

\begin{lemma}
\label{lem:phi=0}
Let $0<\alpha\le\beta<1$ and let $1 \le n\in\mathbb{N}$. Then
\begin{equation}
\label{eq131}
A_n(\alpha,\beta,1)>\bigl|A_n(\alpha,\beta,-1)\bigr|.
\end{equation}
\end{lemma}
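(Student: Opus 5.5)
The plan is to compare the two generating functions directly, using positivity of Maclaurin coefficients rather than any of the integral machinery above. By \cref{eq01}, $A_n(\alpha,\beta,-1)$ is the $n$th Maclaurin coefficient of $(1-z)^{\alpha}(1-z)^{-\beta}=(1-z)^{-(\beta-\alpha)}$. Writing $\delta=\beta-\alpha\ge0$, these coefficients are $d_n:=(\delta)_n/n!\ge0$, with $d_0=1$ and $d_n=0$ for $n\ge1$ when $\delta=0$. Since they are nonnegative, $|A_n(\alpha,\beta,-1)|=d_n$, so it suffices to show $A_n(\alpha,\beta,1)>d_n$.

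Next I would factor the generating function for $A_n(\alpha,\beta,1)$ as
\begin{equation*}
\frac{(1+z)^{\alpha}}{(1-z)^{\beta}}
=\left(\frac{1+z}{1-z}\right)^{\alpha}(1-z)^{-\delta}.
\end{equation*}
For $|z|<1$ the first factor equals
\begin{equation*}
\exp\left\{\alpha\ln\left(\frac{1+z}{1-z}\right)\right\}
=\exp\left\{2\alpha\left(z+\frac{z^{3}}{3}+\frac{z^{5}}{5}+\cdots\right)\right\}
=\sum_{k=0}^{\infty}b_k z^k .
\end{equation*}
The exponent is a power series with nonnegative coefficients, so the exponential has $b_0=1$ and $b_k\ge0$ for all $k$.

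The key point, and the only step needing care, is strict positivity $b_k>0$ for every $k\ge1$. I would obtain it by expanding $\exp(2\alpha z+\cdots)$: the contribution $(2\alpha z)^k/k!$ comes from the $k$th power of the linear term alone. Every other contribution to $b_k$ is a product of nonnegative terms, so no cancellation can occur and $b_k\ge(2\alpha)^k/k!>0$ because $\alpha>0$.

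Finally, the Cauchy product gives
\begin{equation*}
A_n(\alpha,\beta,1)=\sum_{k=0}^{n}b_k d_{n-k}\ge b_0d_n+b_nd_0=d_n+b_n>d_n=\bigl|A_n(\alpha,\beta,-1)\bigr|
\end{equation*}
for $n\ge1$, where all the dropped middle terms are nonnegative. This proves \cref{eq131}, for $n$ of either parity, and covers the case $\alpha=\beta$ as well.
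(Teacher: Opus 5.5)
Your proof is correct, but it takes a different route from the paper. The paper writes $A_n(\alpha,\beta,\omega)$ as the finite convolution $\sum_{k=0}^n\binom{\alpha}{k}\frac{(\beta)_{n-k}}{(n-k)!}\,\omega^k$ and subtracts the cases $\omega=\pm1$. The even-$k$ terms cancel, leaving
\[
A_n(\alpha,\beta,1)-A_n(\alpha,\beta,-1)=2\sum_{k\ \mathrm{odd}}\binom{\alpha}{k}\frac{(\beta)_{n-k}}{(n-k)!}.
\]
This is positive because, by the reflection formula, $\binom{\alpha}{k}>0$ for odd $k$ when $0<\alpha<1$. The hypothesis $\alpha\le\beta$ enters only to show $A_n(\alpha,\beta,-1)=(\beta-\alpha)_n/n!\ge0$, so that the modulus can be dropped.

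You instead factor out $(1-z)^{-(\beta-\alpha)}$ and prove coefficientwise domination, using the fact that $\{(1+z)/(1-z)\}^{\alpha}=\exp\{2\alpha\,\mathrm{artanh}(z)\}$ has strictly positive Maclaurin coefficients.

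The trade-offs are as follows. The paper's route gives an exact identity for the difference, whose positivity holds for every $\beta>0$ without assuming $\alpha\le\beta$. However, it needs $\alpha<1$ for the sign pattern of $\binom{\alpha}{k}$. Your route avoids any sign analysis of binomial coefficients and works for all real $0<\alpha\le\beta$, with no upper bound on either parameter. It also yields an explicit gap $A_n(\alpha,\beta,1)-|A_n(\alpha,\beta,-1)|\ge b_n\ge(2\alpha)^n/n!$. The price is that you use $\beta\ge\alpha$ essentially, to make every $d_j\ge0$ in the Cauchy product, rather than merely to remove the absolute value.
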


\begin{proof}
For $\omega=-1$ it is straightforward to show from \cref{eq01} that
\begin{equation}
\label{eq132}
A_n(\alpha,\beta,-1)
=\frac{\Gamma(n-\alpha+\beta)}
{\Gamma(\beta-\alpha)\,n!}\ge0
\quad (0<\alpha\le\beta<1),
\end{equation}
so $\bigl|A_n(\alpha,\beta,-1)\bigr|=A_n(\alpha,\beta,-1)$ under the hypothesis of the lemma.

Next, again from the generating function \cref{eq01}, expanding $(1+\omega z)^{\alpha}$ and $(1-z)^{-\beta}$ and collecting
coefficients gives
\begin{equation}
\label{eq133}
A_n(\alpha,\beta,\omega)
=\sum_{k=0}^n \binom{\alpha}{k}\,
\frac{\Gamma(n-k+\beta)}{\Gamma(\beta)
\,\Gamma(n-k+1)}\,\omega^k,
\end{equation}
where $\binom{\alpha}{0}=1$ and, from \cref{eq08},
\begin{multline}
\label{eq134}
\binom{\alpha}{k}=\frac{\Gamma(1+\alpha)}
{\Gamma(1-k+\alpha)\,\Gamma(k+1)}
\\
=(-1)^{k+1}\frac{\Gamma(1+\alpha)\,\Gamma(k-\alpha)
\sin(\pi\alpha)}{\pi k!}
\quad (k=1,2,3,\ldots n).
\end{multline}
Thus for $\alpha \in (0,1)$ the $k=0$ and the $k$-odd coefficients in \cref{eq133} are positive. Therefore, using \cref{eq133} with $\omega=\pm1$, we obtain for $0<\alpha\le\beta<1$
\begin{multline*}
A_n(\alpha,\beta,1)-\left|A_n(\alpha,\beta,-1)\right|
=A_n(\alpha,\beta,1)-A_n(\alpha,\beta,-1)
\\
=\sum_{k=0}^n \binom{\alpha}{k}\,
\frac{\Gamma(n-k+\beta)}{\Gamma(\beta)\,\Gamma(n-k+1)}
\left\{1-(-1)^k\right\}
=2\sum_{\substack{1\le k\le n\\ k\ \mathrm{odd}}}\!\!
\binom{\alpha}{k}\,\frac{\Gamma(n-k+\beta)}{\Gamma(\beta)\,\Gamma(n-k+1)} >0.
\end{multline*}
\end{proof}

\section*{Acknowledgement}
Financial support from Ministerio de Ciencia e Innovación pro\-ject PID2024-159583NB-I00 (MICIU/ AEI / 10.13039/501100011033 / FEDER, UE) is acknowledged.

\section*{Conflict of interest}
The author declares no conflicts of interest.

\makeatletter
\interlinepenalty=10000

\bibliographystyle{siamplain}
\bibliography{biblio}

@misc{NIST:DLMF,
	howpublished = {Release 1.1.6 of 2022-06-30},
	key = {{\relax DLMF}},
	note = {F.~W.~J. Olver, A.~B. {Olde Daalhuis}, D.~W. Lozier, B.~I. Schneider, R.~F. Boisvert, C.~W. Clark, B.~R. Miller, B.~V. Saunders, H.~S. Cohl, and M.~A. McClain, eds.},
	title = {{\it NIST Digital Library of Mathematical Functions}},
	url = {http://dlmf.nist.gov/}}

@book{Wong:1989:AAI,
	author = {Wong, R.},
	title = {Asymptotic approximations of integrals},
    publisher = {Academic Press Inc., Boston-New York},
	year = {1989}}

@book{Temme:2015:AMF,
	Author = {Temme, Nico M.},
	Isbn = {978-981-4612-15-9},
	Mrclass = {41-02 (33Cxx 33E20 65D30)},
	Mrnumber = {3328507},
	Mrreviewer = {Jos\~A\copyright{} Luis L\~A${}^3$pez},
	Pages = {xxii+605},
	Publisher = {World Scientific Publishing Co. Pte. Ltd., Hackensack, NJ},
	Series = {Series in Analysis},
	Title = {Asymptotic methods for integrals},
	Volume = {6},
	Year = {2015}}

@book{Olver:1997:ASF,
	Address = {Wellesley, MA},
	Author = {Olver, F. W. J.},
	Isbn = {1-56881-069-5},
	Mrclass = {41-02 (33Cxx 41A60 65D20)},
	Mrnumber = {MR1429619 (97i:41001)},
	Note = {Reprint of the 1974 original [Academic Press, New York]},
	Pages = {xviii+572},
	Publisher = {A K Peters Ltd.},
	Series = {AKP Classics},
	Title = {Asymptotics and special functions},
	Year = {1997}}

@article{Aharonov:1972:OAI,
  Author  = {Aharonov, D. and Friedland, S.},
  Title   = {On an inequality connected with the coefficient conjecture for functions of bounded boundary rotation},
  Journal = {Ann. Acad. Sci. Fenn. Ser. A I},
  Number  = {524},
  Year    = {1972},
  Pages   = {1-14},
  DOI     = {10.5186/aasfm.1973.524}
}

@article{Barnard:2015:BCA,
  Author  = {Barnard, R. W. and Jayatilake, Udaya C. and Solynin, Alexander Yu.},
  Title   = {Brannan's conjecture and trigonometric sums},
  Journal = {Proc. Amer. Math. Soc.},
  Volume  = {143},
  Number  = {5},
  Pages   = {2117-2128},
  Year    = {2015},
  DOI     = {10.1090/S0002-9939-2015-12398-2}
}

@article{Barnard:1997:OAC,
  Author  = {Barnard, Roger W. and Pearce, Kent and Wheeler, William},
  Title   = {On a coefficient conjecture of {B}rannan},
  Journal = {Complex Var. Theory Appl.},
  Volume  = {33},
  Number  = {1-4},
  Pages   = {51-61},
  Year    = {1997},
  DOI     = {10.1080/17476939708815011}
}

@article{Barnard:2021:ADP,
  Author  = {Barnard, Roger W. and Richards, Kendall C.},
  Title   = {A direct proof of {B}rannan's conjecture for $\beta=1$},
  Journal = {J. Math. Anal. Appl.},
  Volume  = {493},
  Number  = {2},
  Pages   = {124534},
  Year    = {2021},
  DOI     = {10.1016/j.jmaa.2020.124534}
}

@incollection{Brannan:1974:OCP,
  Author    = {Brannan, D. A.},
  Title     = {On coefficient problems for certain power series},
  Booktitle = {Proceedings of the Symposium on Complex Analysis (Univ. Kent, Canterbury, 1973)},
  Pages     = {17-27},
  Publisher = {Cambridge Univ. Press},
  Address   = {London},
  Series    = {London Math. Soc. Lecture Note Ser.},
  Number    = {12},
  Year      = {1974}
}

@article{Deniz:2020:TFS,
  Author  = {Deniz, E. and {\c{C}}a{\u{g}}lar, M. and Sz{\'a}sz, R.},
  Title   = {The final step in a proof of {B}rannan's conjecture for $\beta=1$},
  Journal = {J. Math. Anal. Appl.},
  Volume  = {487},
  Number  = {2},
  Pages   = {124001},
  Year    = {2020},
  DOI     = {10.1016/j.jmaa.2020.124001}
}

@article{Jayatilake:2013:BCF,
  Author  = {Jayatilake, Udaya C.},
  Title   = {Brannan's conjecture for initial coefficients},
  Journal = {Complex Var. Elliptic Equ.},
  Volume  = {58},
  Number  = {5},
  Pages   = {685-694},
  Year    = {2013},
  DOI     = {10.1080/17476933.2011.605445}
}

@article{Milcetich:1989:OAC,
  Author  = {Milcetich, John G.},
  Title   = {On a coefficient conjecture of {B}rannan},
  Journal = {J. Math. Anal. Appl.},
  Volume  = {139},
  Number  = {2},
  Pages   = {515-522},
  Year    = {1989},
  DOI     = {10.1016/0022-247X(89)90125-X}
}

@article{Ruscheweyh:2007:OBC,
  Author  = {Ruscheweyh, Stephan and Salinas, Luis},
  Title   = {On {B}rannan's coefficient conjecture and applications},
  Journal = {Glasg. Math. J.},
  Volume  = {49},
  Number  = {1},
  Pages   = {45-52},
  Year    = {2007},
  DOI     = {10.1017/S0017089507003400}
}

@article{Szasz:2020:OTB,
  Author  = {Sz{\'a}sz, R{\'o}bert},
  Title   = {On the {B}rannan's conjecture},
  Journal = {Mediterr. J. Math.},
  Volume  = {17},
  Number  = {1},
  Pages   = {38},
  Year    = {2020},
  DOI     = {10.1007/s00009-019-1469-9}
}

@article{Cotirla:2024:OTG,
  author  = {Lumini{\c{t}}a-Ioana Cot\^{\i}rl\u{a} and R{\'o}bert Sz{\'a}sz},
  title   = {On the general case of {B}rannan conjecture},
  journal = {J. Math. Inequal.},
  volume  = {18},
  number  = {3},
  year    = {2024},
  pages   = {953-969},
  doi     = {10.7153/jmi-2024-18-52},
  url     = {}
}

@misc{Dunster:2026:BWM,
  author       = {T. M. Dunster},
  title        = {{B}rannan-{W}atson: Maple source code and animated figures},
  howpublished = {\url{https://github.com/tmdunster/Brannan-Watson#files-and-animations}},
  note         = {GitHub repository, accessed 2026-02-15}
}
\end{document}